\newcommand{\bbE}{{\bf E}}
\newcommand{\bbC}{{\bf C}}
\newcommand{\bbm}{{\bf m}}
\newcommand{\bqn}{\begin{eqnarray*}}
	\newcommand{\eqn}{\end{eqnarray*}}
\newcommand{\bqa}{\begin{eqnarray}}
\newcommand{\eqa}{\end{eqnarray}}
\newtheorem{thm}{Theorem}[section]
\newtheorem{remark}[thm]{Remark}
\newtheorem{lemma}[thm]{Lemma}
\numberwithin{equation}{section}
\numberwithin{equation}{section}
\title{The limiting spectral distribution of large dimensional general information-plus-noise type matrices}
\author{Huanchao Zhou} 
\author{Zhidong Bai}
\author{Jiang Hu}
\affil{School of Mathematics and Statistics, Northeast Normal University, China}
\date{}
\begin{document}
\maketitle

\begin{abstract}
	
	Let $ X_{n} $ be $ n\times N $ random complex matrices, $ R_{n} $ and $ T_{n} $ be  non-random complex matrices with dimensions $n\times N$ and $n\times n$, respectively. We assume that the entries  of $ X_{n} $ are independent and identically distributed, $ T_{n} $ are nonnegative definite Hermitian matrices and  
	$T_{n}R_{n}R_{n}^{*}= R_{n}R_{n}^{*}T_{n} $. 
	The general information-plus-noise type matrices are defined by $ C_{n} =\frac{1}{N} T_{n}^{\frac{1}{2}} \left( R_{n} +X_{n}\right) \left( R_{n}+X_{n}\right)^{*}T_{n}^{\frac{1}{2}} $. 
	In this paper, we establish the limiting spectral distribution of the large dimensional  general information-plus-noise type matrices $C_{n}$. Specifically, we show that as $n$ and $N$ tend to infinity proportionally, the empirical distribution of the eigenvalues of $C_{n}$ converges weakly to a non-random probability distribution, which is characterized in terms of a system of equations of its Stieltjes transform.

\end{abstract}	

\providecommand{\keywords}[1]
{
	\textbf{\text{Keywords: }} Limiting spectral distribution, Random matrix theory, Stieltjes transform
}

\keywords{one, two, three, four.}

\section{Introduction}

Let $ A $ be an $ n \times n $ matrix  with only real eigenvalues  $ \lambda_{1}, \lambda_{2}, \ldots, \lambda_{\textit{n}} $. Denote $ F^{A} $ be the empirical spectral distribution (ESD) function of $ A $, that is $$ F^{A}(x)=\frac{1}{n} \sum_{i=1}^{n}I_{(\lambda_{i} \leq x)}, $$ 
where $ I_{(\cdotp)} $ represents the indicator function. The limiting spectral distribution (LSD)  is defined as the weak limit of the ESD sequence. For any probability distribution function $ F $, the Stieltjes transform of $ F $ is defined as 
\begin{align}
	\begin{split}
		m_{F}(z)=\int \dfrac{1}{\lambda-z} dF(\lambda), \quad 
		z\in \mathbb{C}^{+}, \label{1.2}
	\end{split}
\end{align}
and $ F $ can be obtained by the inversion formula 
\begin{align}
	F(b)-F(a)=\frac{1}{\pi}\lim_{v\rightarrow 0^{+}}\int_{a}^{b}\Im m_{F}(u+iv)du, \label{1.3}
\end{align}
where $ a $, $ b $ are continuity points of $ F $.

It is well known that find the LSD for random matrices has constituted a basic part of large dimensional random matrices theory (LDRMT).  
Since the famous semicircular law and M-P law were established in \cite{wigner1958distribution} and \cite{marchenko1967eigenvalue} respectively,  many researchers have contributed to its subsequent development. One of the most extensively investigated in LDRMT is the so-called sample covariance type matrix taking the form $ B_{n} =\frac{1}{N} T^{\frac{1}{2}}_{n} X_{n} X^{*}_{n}T^{\frac{1}{2}}_{n} $, where $ ^* $ stands for the complex conjugate transpose, $ X_{n} $ is an $ n \times N $  random matrix with independent and identically distributed (i.i.d.) entries $(x_{ij})$ and $ T_{n} $ is an $ n \times n $ non-random nonnegative definite Hermitian matrix. Results on the LSD of the sample covariance type matrix can be found in \cite{yin1986limiting}  and  \cite{silverstein1995strong}.

For the sample covariance type matrix, most of the existing results  is under the centered condition, that is the entries of $ X_{n} $ are zero mean.  
Actually, the large non-centered random matrices also have a significance which are all their own.
One example is the large dimensional information-plus-noise matrix, which is used in the detection problem of array signal processing. More specific, consider the matrix $$ C_{n} =\frac{1}{N}T_{n}^{\frac{1}{2}} \left(R_{n}+X_{n} \right) \left(R_{n} +X_{n} \right)^{*}T_{n}^{\frac{1} {2}},$$ 
where $ R_{n} $ is an $ n\times N $ non-random matrix. We call this non-centered random matrices $ C_{n} $  the general information-plus-noise type matrices which can form a class of large dimensional random matrices.
When $ T_{n}= \sigma^{2} I_{n} $, the matrix  $ C_{n} $ is known as the information-plus-noise type matrix, which is denoted by $$ D_{n}=\frac{1}{N}(R_{n}+\sigma X_{n})(R_{n}+\sigma X_{n})^{*}. $$  
Dozier and Silverstein in \cite{dozier2007empirical} considered the LSD of this type matrices. 
Since the matrix $ R_{n} $ contains the signal information as transmitted and the additive noise matrix 
$ \sigma X_{n} $ is centered, as the numbers of sensors $ n $ and samples $ N $ tend to infinity, they proved that, almost surely, $ F^{D_{n}} $ converges weakly to a nonrandom distribution $ F $, whose Stieltjes transform $ m=m(z) $ satisfies 
\begin{align}
\begin{split}
m=\int\frac{dH(t)}{\frac{t}{1+\sigma^{2}cm}-(1+\sigma^{2}cm)z+\sigma^{2}(1-\textit{c})}, \label{1.1}
\end{split}
\end{align}
and for any $ z\in \mathbb{C}^{+}\equiv\{z\in \mathbb{C}: \Im z>0\} $,  the solution $ m(z) $ is unique in 
$ \mathbb{C}^{+} $. Here $H$ is the LSD of  $ \frac{1}{N}R_{n}R_{n}^{*} $. 

Although the explicit solution of equation \eqref{1.1} is usually not easy  to obtain, the Stieltjes transform 
$ m(z) $ can reveal much of the behavior of the LSD $ F $. Actually, the Stieltjes transform is one of most effective tools to analyze the LSD.
This method was started in \cite{marvcenko1967distribution,pastur1972spectrum,pastur1973spectra} to studying the spectral analysis problems. 
Many papers have contributions to study the LSD of random matrices using the Stieltjes transform, and some important related work can be found in \cite{silverstein1995strong,silverstein1995empirical,silverstein1995analysis}. 
In terms of application, such as wireless communication, this method also plays a great role in processing multiple input multiple output (MIMO) channels in \cite{couillet2011deterministic} and \cite{wen2012deterministic}, etc.
More important, by the inversion formula \eqref{1.3} of the Stieltjes transform, some analytic behavior of the LSD can be obtained in \cite{bai1998no,bai1999exact,dozier2007analysis}.

In this paper, our main work is to find and character the LSD of $F^{C_{n}}$. 
The method we used  is the Stieltjes transform  which is analytic to that used in \cite{dozier2007empirical}. 
Here we show a property of Stieltjes transform that will be needed later. If $ F $ has nonnegative support, then
\begin{align}
\Im zm_{F}(z)=\int \dfrac{\lambda v} {|\lambda-z|^{2}} d F (\lambda) \geq 0, \label{2.3}
\end{align}
for any $ z=u+ iv\in \mathbb{C}^{+} $.
In addition , if $ n \times n $ matrices \textit{A} have real eigenvalues $ \lambda_{1}, \lambda_{2}, \ldots, \lambda_{p} $, the Stieltjes transform of $ F^{A} $ also can be expressed by
\begin{align}
m_{F^{A}}(z)=\frac{1}{n}\sum_{i=1}^{n}\dfrac{1}{\lambda_{i}-z}=\frac{1}{n}\mathrm{tr}(A-zI_{n})^{-1}, \label{1.5}
\end{align} 
which is suitable for our analysis.

The rest sections of this paper are organized as follows. Our main result is present in section \ref{2}. Section \ref{proof} gives a detailed proof of our main results.  Simulations are conducted to evaluate our work in Section \ref{sim}. And some useful lemmas are presented in Appendix.

\section{Main result} \label{2}
In this section, we  show that as the dimentions tend to infinity proportionally, the ESD of $C_{n}$ converges weakly to a non-random LSD almost surely, which is characterized in terms of a system of equations of LSD its Stieltjes transform. To consider the LSD of the general information-plus-noise type matrices, it is necessary to make the following assumptions.

\begin{itemize}
	\item
		{\bf Assumption (a)}: $ \frac{n}{N}=c_{n}\rightarrow c>0 $, as $\min\{n,N\}\to\infty$.
	\item
	{\bf Assumption (b)}: The entries of  $ X_{n} $ are  i.i.d.,  zero mean and unit variance.
%
%

    \item
	{\bf Assumption (c)}: $ R_{n}R_{n}^{*} $ and $ T_{n} $ are multiplication commutative.
	\item
	{\bf Assumption (d)}: As $\min\{n,N\}\rightarrow\infty$, the two-dimensional distribution function 
	$ H_{n}(x,y)=\frac{1}{n} \sum_{i=1}^{n} \textit{I}_{(t_{i}<x, s_{i}<y )} $, converges weakly to a nonrandom distribution $ H(x,y) $ almost surely, 
	where $ t_{i}, s_{i} $ are the paired eigenvalues of 
	$ T_{n} $ and $ \frac{1}{N}R_{n}R_{n}^{*} $,  respectively.

\end{itemize}

Then, we have the following theorem for the LSD of the general information-plus-noise type matrices $C_{n}$. 

\begin{thm} \label{them1}
	Under Assumptions (a)-(d), almost surely, the ESD of the general information-plus-noise type matrices $C_{n}$ almost surely converges  to a nonrandom LSD  $ F $, whose Stieltjes transform  $ m=m_{F}(z) $ satisfies the equation system
\begin{equation}
	\left\{
	\begin{aligned}
	m=\int\frac{dH(s,t)}{\frac{st}{1+cg}-(1+cmt)z+t(1-c)}, \\
	g=\int\frac{tdH(s,t)}{\frac{st}{1+cg}-(1+cmt)z+t(1-c)}. \label{2.1}
	\end{aligned}
	\right.
\end{equation}
Moreover, for each $ z\in \mathbb{C}^{+} $, $ (m, g) $ is the unique solution to 
$ \eqref{2.1} $ in  $ \mathbb{C}^{+} $.
\end{thm}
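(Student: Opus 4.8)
The plan is to follow the classical Stieltjes transform / Marchenko--Pastur strategy, adapted to the non-centered, doubly-indexed setting. The first step is a truncation and centering argument: using Assumptions (a)--(d) one truncates the entries of $X_n$ at a slowly growing level, recenters, and rescales, showing by standard rank and norm inequalities (e.g. the rank inequality for $L_\infty$ distance between ESDs, and Bai--Yin type bounds on $\|X_n\|$) that none of these operations affect the LSD. One also truncates the eigenvalues of $T_n$ and $\frac1N R_nR_n^*$ at a large constant, which is harmless by Assumption (d) and the weak convergence of $H_n$. Because of the commutativity Assumption (c), $T_n$ and $\frac1N R_nR_n^*$ can be simultaneously diagonalized, so after truncation we may assume $T_n = \mathrm{diag}(t_1,\dots,t_n)$ and write $R_nR_n^* = \mathrm{diag}(N s_1,\dots,N s_n)$ in the same basis, with all $t_i,s_i$ bounded; this is what makes the two-dimensional measure $H$ the natural object.

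The core of the argument is to show that $m_n(z) := \frac1n\tr(C_n - zI)^{-1}$ converges, almost surely, to the solution of \eqref{2.1}. I would introduce the companion quantity $g_n(z) := \frac1n\tr\big(T_n(C_n-zI)^{-1}\big)$ (the weighted trace), since the system \eqref{2.1} genuinely couples $m$ and $g$. The workhorse is the resolvent identity together with the Sherman--Morrison / rank-one perturbation formula applied column by column to $R_n + X_n$: writing $C_n = \frac1N\sum_k T_n^{1/2}(r_k + x_k)(r_k+x_k)^* T_n^{1/2}$ over the $N$ columns, each term $(r_k+x_k)^*T_n^{1/2}(C_n - zI)^{-1}T_n^{1/2}(r_k+x_k)$ is handled by the rank-one inverse formula, and then concentration (martingale difference / McDiarmid, plus a trace-lemma of the Bai--Silverstein type for quadratic forms $x^*Ax \approx \tr A$, here in the non-centered form where the deterministic part $r_k^*Ar_k$ must be tracked separately) shows each such quadratic form concentrates around a deterministic expression involving $m_n$, $g_n$, and the entries $t_i$, $s_i$. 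Summing over $i$ (the diagonal entries of the resolvent) and over the paired $(t_i,s_i)$ then yields, up to $o(1)$ errors, exactly the two fixed-point relations in \eqref{2.1} with $H_n$ in place of $H$; passing to the limit via Assumption (d) and the boundedness of the integrand gives the claimed system. I expect the main obstacle to be precisely this step: correctly bookkeeping the non-centered quadratic forms so that the signal part $R_n$ contributes the $st/(1+cg)$ term while the noise part contributes the $-(1+cmt)z + t(1-c)$ terms, and doing so uniformly enough to get almost-sure (not just in-probability) convergence — this is where the commutativity hypothesis is essential and where the bulk of the estimates live.

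Finally, one must establish existence and uniqueness of the solution $(m,g) \in \mathbb{C}^+\times\mathbb{C}^+$ (or with $g$ in the appropriate half-plane) for each $z \in \mathbb{C}^+$. Existence follows from the convergence argument itself (any subsequential limit of $(m_n,g_n)$ solves the system, and tightness of the ESDs is automatic from the truncation step). For uniqueness I would argue directly from the equation system: suppose $(m_1,g_1)$ and $(m_2,g_2)$ are two solutions; subtracting the two integral representations and estimating, one gets a linear system $|m_1-m_2| \le \alpha|m_1-m_2| + \beta|g_1-g_2|$ and similarly for $g$, where $\alpha,\beta$ are integrals of the form $\int \frac{(\cdots)\,dH}{|\text{denom}_1||\text{denom}_2|}$; using the imaginary-part bound $\Im(\text{denom}) \ge v\cdot(\text{something positive})$ — which is where property \eqref{2.3} and the sign of $\Im m$, $\Im g$ enter — one shows the relevant $2\times 2$ contraction matrix has spectral radius $<1$, forcing $m_1=m_2$, $g_1=g_2$. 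The delicate point here is verifying that genuine solutions must have $\Im m>0$ and $\Im g>0$ (so that the denominators stay bounded away from the real axis); this is typically done by first proving it for $\Im z$ large, then using analyticity and a continuity/connectedness argument in $z$ to propagate it to all of $\mathbb{C}^+$.
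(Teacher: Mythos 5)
Your proposal follows essentially the same route as the paper: truncate, introduce the companion quantity $g_n=\frac1n\mathrm{tr}\,T_n(C_n-zI)^{-1}$, apply Sherman--Morrison column by column, concentrate the resulting quadratic forms via Bai--Silverstein--type trace lemmas (the paper uses sixth/twelfth moment bounds plus Borel--Cantelli rather than McDiarmid, but this is the same mechanism), pass to the limit via Assumption (d) to get the coupled system, and prove uniqueness by differencing two solutions and contradicting a positivity inequality obtained from the imaginary parts of \eqref{2.1}. The only organizational difference is that the paper compares $\frac1n\mathrm{tr}(C_n-zI)^{-1}$ directly to $\frac1n\mathrm{tr}(K_n-zI)^{-1}$ for an explicit deterministic matrix $K_n$ built from $m_n,g_n$, rather than concentrating each quadratic form in isolation, but this amounts to the same computation.
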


\begin{remark}
	 In this theorem, $ g $ is the limit of  $ g_{n}=\frac{1}{n}\mathrm{tr}\left(C_{n}-zI\right)^{-1}T_{n} $ actually. 
\end{remark}

\begin{remark}
	The condition that $ R_{n}R_{n}^{*} $ and $ T_{n} $ are commutative in Assumption (c) is necessary in order to get the expression of LSD. 
	In Assumption (d), the two-dimensional distribution function $ H_{n}(x,y) $ is assigned a mass 
	$ \frac{1}{n} $ at each pair of eigenvalues of the two matrices $ R_{n}R_{n}^{*} $ and $ T_{n} $.
\end{remark}

The proof will be presented in Section \ref{proof}. We introduce some notations used in the following.
For any $ n\times N $ matrix $ A $, $ \mathrm{tr}(A) $ and $ A^{*} $ denote its trace and complex conjugate transpose respectively, and $ ||A|| $ deontes its spectral norm, that is the largest singular value of $ A $. For $ q\in\mathbb{C}^{n} $, 
$ ||q|| $ deontes the Eucliden norm. The constant $ K $, appearing henceforth in some of the expressions is nonrandom and may take on different values from one appearance to the next.

\section{Proof of Theorem \ref{them1}} \label{proof}

In this section, we present the proof of Theorem \ref{them1}.

\subsection{Truncation and centralization}

Following the same steps of truncation, centralization, and rescaling in \cite{dozier2007empirical}, we truncated $ X_{n} $ at $ \pm \ln{n} $ and centralized. 
Since $ R_{n}R_{n}^{*} $ and $ T_{n} $ are multiplication commutative,
we write $ T_{n} $ in its spectral decomposition 
$ T_{n}=U_{1} \rm diag(\textit{t}_{1}, \ldots, \textit{t}_{n}) \textit{U}_{1}^{*} $, 
where $ t_{i}\geq 0 $ are the eigenvalues of $ T_{n} $ and $ U_{1} $ is a unitary matrix. 
Following the trunction technique used in Section 4.2 in \cite{bai2010spectral}, 
let $\hat{T}_{n}=U_{1} \rm diag(\hat{\textit{t}}_{1}, \ldots, \hat{\textit{t}}_{n}) \textit{U}_{1}^{*} $,
where $\hat{\textit{t}}_{i}=t_{i} \textit{I}_{(t_{i}<\tau)} $, where $ \tau $ is pre-chosen constant.
Meanwhile, we write $ \frac{1}{\sqrt{N}}R_{n} $ in its singular value decomposition 
$ \frac{1}{\sqrt{N}}R_{n}=U_{1} \rm diag(\sqrt{\textit{s}_{1}}, \ldots, \sqrt{\textit{s}_{n}}) \textit{V}_{1}^{*} $ 
and let $\frac{1}{\sqrt{N}}\hat{R}_{n}=U_{1} \rm diag(\sqrt{\hat{\textit{s}}_{1}}, \ldots, \sqrt{\hat{\textit{s}}_{n}}) \textit{V}_{1}^{*} $ where 
$ \sqrt{\hat{\textit{s}}_{i}}=\sqrt{\textit{s}_{i}}\textit{I}_{(\sqrt{\textit{s}_{i}} \leq \tau)} $.
For brevity, we still use $ T_{n} $ and $ R_{n} $ for the truncated matrix $ T_{n} $ and $ R_{n} $, and we can consider $ T_{n} $ and $ \frac{1}{N}R_{n}R_{n}^{*} $ are spectral norm bounded. 

Therefore, the proof of Theorem \ref{them1} can be done under the following simplified conditions:
\begin{center}
 (1)  $ |x_{ij}|\leq \ln n $,  
 ~~(2)  $ ||\frac{1}{N}R_{n}R_{n}^{*}||\leq K $ , 
 ~~(3)  $ ||T_{n}||\leq K $ .
\end{center}

\subsection{Completing the proof}

We first provide three lemmas, which provide the theoretical foundational of using Stieltjes transform method to show the convergence of the ESD of random matrices. 
\begin{lemma} \label{lemma3.1}
	(Corollary to Theorem 25.10 of \cite{RN02}) If $ \{F_{n} \} $ is a tight subsequence of probability distributions, and if each subsequence that converges weakly at all converges weakly to the probability distribution $ F $, then $ F_{n} $ converges weakly to $ F $.
\end{lemma}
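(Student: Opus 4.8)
\textbf{Proof plan for Lemma \ref{lemma3.1}.}

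This is the standard ``subsequence principle'' for weak convergence, and the plan is to reduce it to the portmanteau characterization of weak convergence together with a compactness argument. Recall that a sequence $\{F_n\}$ of probability distributions converges weakly to $F$ if and only if $F_n(x)\to F(x)$ at every continuity point $x$ of $F$; equivalently (since we are on $\mathbb{R}$ and all the measures are probability measures), if and only if $\int \varphi\, dF_n \to \int \varphi\, dF$ for every bounded continuous $\varphi$. I will argue by contradiction: suppose $F_n$ does not converge weakly to $F$. Then there is a bounded continuous function $\varphi$, a number $\varepsilon>0$, and a subsequence $\{F_{n_k}\}$ along which $\left|\int \varphi\, dF_{n_k} - \int \varphi\, dF\right| \ge \varepsilon$ for all $k$.

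Now I invoke tightness. Since $\{F_n\}$ is tight, so is the subsequence $\{F_{n_k}\}$, and by Prohorov's theorem (Theorem 25.10 of \cite{RN02}, of which this is stated to be a corollary) there is a further subsequence $\{F_{n_{k_j}}\}$ that converges weakly to some probability distribution $G$. By the hypothesis of the lemma, every subsequence of $\{F_n\}$ that converges weakly at all converges weakly to $F$; applying this to $\{F_{n_{k_j}}\}$ forces $G=F$. Hence $\int \varphi\, dF_{n_{k_j}} \to \int \varphi\, dF$, which contradicts $\left|\int \varphi\, dF_{n_{k_j}} - \int \varphi\, dF\right| \ge \varepsilon$ for all $j$. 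This contradiction shows $F_n \Rightarrow F$, completing the proof.

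The only genuine ingredient beyond elementary manipulation is Prohorov's theorem (the ``tight $\Rightarrow$ relatively compact'' direction), which is exactly the content cited from \cite{RN02}; everything else is the contradiction bookkeeping and the equivalence between the testing-function formulation and the continuity-point formulation of weak convergence. I expect no real obstacle here: the main point to be careful about is that the extraction is done twice (first to get a subsequence violating convergence, then to get a weakly convergent sub-subsequence), and that the limit of the second extraction is pinned down to be $F$ purely by the lemma's hypothesis rather than by any further estimate. Since the statement and its proof are completely classical, in the paper this lemma will simply be quoted and used as the device that converts ``every weak limit point of $F^{C_n}$ equals $F$'' (established via the Stieltjes transform) into genuine weak convergence.
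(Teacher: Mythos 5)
Your argument is correct and is exactly the standard subsequence-principle proof underlying the cited result; the paper itself offers no proof of this lemma, merely quoting it as a corollary of Theorem 25.10 of Billingsley, and your contradiction argument via tightness, Prohorov/Helly extraction, and the hypothesis pinning the limit to $F$ is the canonical way to establish it.
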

\begin{lemma} \label{lemma3.2}
    (Theorem 2.3.5 of \cite{lixin2007spectral}) Suppose $ \{F_{n} \} $ is a sequence of probability distributions and $ F $ is a probability distribution. Then $ F_{n} $ converges weakly to $ F $ if and only if  $ m_{F_{n}}(z) \to m_{F}(z)  $ for all $ z \in \mathbb{C}^{+} $. 
\end{lemma}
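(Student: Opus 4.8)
The plan is to handle the equivalence one direction at a time; the forward implication is essentially the definition of weak convergence, while the reverse one carries all the content. For the forward implication, I would fix $z=u+iv\in\mathbb{C}^{+}$, observe that $x\mapsto(x-z)^{-1}$ is continuous and bounded on $\mathbb{R}$ (its modulus is at most $1/v$), and invoke the Helly--Bray theorem: if $F_{n}\Rightarrow F$ then $m_{F_{n}}(z)=\int(x-z)^{-1}\,dF_{n}(x)\to\int(x-z)^{-1}\,dF(x)=m_{F}(z)$. Since $z\in\mathbb{C}^{+}$ is arbitrary, this direction is complete.

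For the reverse implication, assume $m_{F_{n}}(z)\to m_{F}(z)$ for every $z\in\mathbb{C}^{+}$. The first step is to establish tightness of $\{F_{n}\}$. I would use the elementary identity, valid for any probability distribution $G$ and any $v>0$, $A>0$,
\[
1-v\,\Im m_{G}(iv)=\int\frac{x^{2}}{x^{2}+v^{2}}\,dG(x)\ \ge\ \frac{A^{2}}{A^{2}+v^{2}}\,G(\{|x|>A\}),
\]
together with the fact (dominated convergence) that $v\,\Im m_{G}(iv)\to 1$ as $v\to\infty$. Given $\varepsilon>0$, pick $v_{0}$ with $v_{0}\,\Im m_{F}(iv_{0})>1-\varepsilon$; the pointwise convergence at $z=iv_{0}$ then forces $v_{0}\,\Im m_{F_{n}}(iv_{0})>1-2\varepsilon$ for all $n\ge n_{0}$, and taking $A=v_{0}$ in the displayed inequality gives $F_{n}(\{|x|>v_{0}\})<4\varepsilon$ for $n\ge n_{0}$. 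Enlarging the cutoff to absorb the finitely many $F_{1},\dots,F_{n_{0}-1}$ (each individually tight) yields tightness of the whole sequence.

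The second step is to identify the subsequential limits. Let $\{F_{n_{k}}\}$ be any subsequence; by tightness and Helly's selection theorem it has a further subsequence converging weakly to some \emph{probability} distribution $G$. Applying the forward implication just proved along that subsequence gives $m_{F_{n_{k_{j}}}}(z)\to m_{G}(z)$, while the hypothesis gives $m_{F_{n_{k_{j}}}}(z)\to m_{F}(z)$; hence $m_{G}\equiv m_{F}$ on $\mathbb{C}^{+}$. The inversion formula \eqref{1.3} then shows $G$ and $F$ assign equal mass to every interval whose endpoints are common continuity points, so $G=F$. Thus every subsequence of $\{F_{n}\}$ has a further subsequence converging weakly to $F$, and Lemma~\ref{lemma3.1} lets me conclude $F_{n}\Rightarrow F$.

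The main obstacle is the tightness step: one must extract a \emph{uniform-in-$n$} tail bound from merely pointwise convergence of the Stieltjes transforms. This is precisely where it matters that the limit $m_{F}$ is known a priori to be the Stieltjes transform of a genuine probability measure, so that $v\,\Im m_{F}(iv)\to 1$ and no mass escapes to infinity; were $F$ only assumed to be a sub-probability, the statement would fail. Everything else — Helly--Bray, Helly selection, the inversion formula, and the subsequence principle — is standard.
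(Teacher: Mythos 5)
Your proof is correct: the forward direction via boundedness and continuity of $x\mapsto(x-z)^{-1}$, and the reverse direction via the tail bound $1-v\,\Im m_{F_n}(iv)\ge\tfrac{A^2}{A^2+v^2}F_n(\{|x|>A\})$ to get tightness, followed by Helly selection, the inversion formula \eqref{1.3}, and the subsequence principle of Lemma~\ref{lemma3.1}, is exactly the standard textbook argument. The paper itself gives no proof (it cites the result from the reference), and your argument matches the canonical one used there, including the correct observation that the hypothesis that $F$ is a genuine probability distribution is what prevents escape of mass to infinity.
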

\begin{lemma} \label{lemma3.3}
	(Lemma 1.1 of \cite{bai2010limiting})
	For any random matrices, let $ F^{A_{n}} $ denote the ESD of $A_{n} $ and $ m_{n}(z) $ its Stieltjes transform. Then, if $ F^{A_{n}}$ is tight with probability one and for each $ z\in \mathbb{C}^{+} $, 
	$ m_{n}(z) $ converges almost surely to a nonrandom limit $ m(z) $ as $ n \rightarrow \infty $, then there exists a nonrandom distribution function $ F $ taking $ m(z) $ as its Stieltjes transform such that with probability one, as $ n \rightarrow\infty$, $ F^{A_{n}} $ converges weakly to $ F $.
\end{lemma}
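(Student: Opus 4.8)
\textbf{Proof proposal for Lemma \ref{lemma3.3}.}

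The plan is to deduce this statement from the two preceding lemmas (Lemma \ref{lemma3.1} and Lemma \ref{lemma3.2}), working on a single almost-sure event on which both hypotheses hold simultaneously. First I would fix a countable dense subset $D\subset\mathbb{C}^{+}$. By hypothesis, for each $z\in D$ there is a null set off which $m_{n}(z)\to m(z)$; taking the countable union of these null sets together with the null set off which $\{F^{A_{n}}\}$ fails to be tight, we obtain a single event $\Omega_{0}$ of probability one on which: (i) $\{F^{A_{n}}\}$ is tight, and (ii) $m_{n}(z)\to m(z)$ for every $z\in D$. All subsequent reasoning takes place on a fixed realization $\omega\in\Omega_{0}$.

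Next I would upgrade the convergence $m_{n}(z)\to m(z)$ from the dense set $D$ to all of $\mathbb{C}^{+}$. The functions $z\mapsto m_{n}(z)$ are analytic on $\mathbb{C}^{+}$ and uniformly bounded on compact subsets of $\mathbb{C}^{+}$ (indeed $|m_{n}(z)|\le 1/\Im z$, since $m_{n}$ is a Stieltjes transform of a probability measure on $\mathbb{R}$). Hence $\{m_{n}\}$ is a normal family, and since it converges pointwise on the dense set $D$, a standard Vitali/Montel argument gives locally uniform convergence of $m_{n}$ on all of $\mathbb{C}^{+}$ to an analytic limit function, which agrees with $m(z)$ on $D$ and therefore everywhere on $\mathbb{C}^{+}$ extends $m$. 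In particular $m_{n}(z)\to m(z)$ for every $z\in\mathbb{C}^{+}$ on the event $\Omega_{0}$.

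Then I would extract the limiting distribution via subsequential compactness. Fix $\omega\in\Omega_{0}$. By tightness and Helly's selection theorem (Lemma \ref{lemma3.1} is the relevant packaging), every subsequence of $\{F^{A_{n}}\}$ has a further subsequence converging weakly to some probability distribution, say $\tilde F$. Along that sub-subsequence, Lemma \ref{lemma3.2} gives $m_{F^{A_{n}}}(z)\to m_{\tilde F}(z)$ for all $z\in\mathbb{C}^{+}$; but we already know $m_{F^{A_{n}}}(z)=m_{n}(z)\to m(z)$, so $m_{\tilde F}(z)=m(z)$ for all $z\in\mathbb{C}^{+}$. By the inversion formula \eqref{1.3} (equivalently, uniqueness of the Stieltjes transform), $\tilde F$ is uniquely determined by $m$; call it $F$. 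Thus every weakly convergent subsequence of $\{F^{A_{n}}\}$ has the \emph{same} limit $F$, and $F$ is a genuine probability distribution function with $m_{F}=m$. Applying Lemma \ref{lemma3.1} now yields $F^{A_{n}}\Rightarrow F$ on $\Omega_{0}$, i.e. with probability one; this $F$ is nonrandom because $m$ is nonrandom.

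The only mild subtlety — and the step I would be most careful about — is the passage from "convergence of $m_{n}(z)$ to the \emph{nonrandom} limit $m(z)$" to the conclusion that the limit function $m$ is itself the Stieltjes transform of a probability distribution: a priori $m$ is merely the locally uniform limit of Stieltjes transforms, and one must rule out escape of mass to infinity. This is exactly where tightness of $\{F^{A_{n}}\}$ is used, via the subsequential argument above; without it, $m$ could be the Stieltjes transform of a sub-probability measure. Everything else is bookkeeping with null sets and standard complex-analytic normal-family facts, so no lengthy computation is needed.
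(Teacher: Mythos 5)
Your proof is correct. Note, however, that the paper does not prove this lemma at all: it is quoted verbatim as Lemma 1.1 of the cited reference \cite{bai2010limiting}, so there is no in-paper argument to compare against. What you have written is essentially the standard proof that the cited source gives: pass to a countable dense set to obtain a single full-measure event, use the uniform bound $|m_n(z)|\le 1/\Im z$ together with Vitali/Montel to upgrade to convergence on all of $\mathbb{C}^{+}$, and then combine tightness (Helly selection plus Lemma \ref{lemma3.1}) with the continuity theorem for Stieltjes transforms (Lemma \ref{lemma3.2}) and the inversion formula to identify every subsequential weak limit with the same probability distribution $F$. Your closing remark correctly isolates the one genuine issue — that tightness is what prevents $m$ from being the transform of a sub-probability measure. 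A small simplification is available: the Vitali step is not strictly necessary, since once a subsequential limit $\tilde F$ satisfies $m_{\tilde F}=m$ on the dense set $D$, analyticity of Stieltjes transforms already forces $m_{\tilde F}=m$ on all of $\mathbb{C}^{+}$ and hence pins down $\tilde F$; but including it does no harm.
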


The above three lemmas ensure that the convergence of ESD can be proved by studying their Stieltjes transformation and the limit of their Stieltjes transforms can be found by showing the convergence. 
Thus, to prove Theorem \ref{them1}, we first need to show that $ \{ F^{C_{n}} \} $ is tight with probability one and then prove the almost surely convergence of the Stieltjes transform 
$ \textit{m}_{{F^{\textit{C}_{n}}}}(\textit{z}) $.

From Dozier and Silverstein \cite{dozier2007empirical}, we can get the almost surely tightness of the sequence
$ \{ F^{D_{n}} \} $, where 
$ D_{n}=\frac{1}{N}\left( R_{n}+\sigma X_{n}\right) \left( R_{n}+\sigma X_{n}\right)^{*} $. 
This together with Assumption (d)  and definition of $T_{n} $ guarantees that the sequence
$ \{ F^{T_{n}} \} $ is tight almost surely. And then applying Lemma \ref{lemma2.2} in Appendix, it gives us that $ \{ F^{C_{n}} \} $ is almost surely tight.

Next, we give some preliminary results for the proof of the convergence of $ m_{{F^{C_{n}}}} (z) $.
In the  sequel, we let $ z=u+iv\in \mathbb{C}^{+} $. 
and  $\bbC_{n}=\frac{1}{N}\left(R_{n}+X_{n}\right)^{*}T\left(R_{n}+X_{n} \right) $.
It is easy to find that the eigenvalues of the matrix $ \bbC_{n} $ are the same as those of the matrix 
$ C_{n} $ except $ |n-N| $ zero eigenvalues,  
and the difference is expressed via their ESD   
\begin{align}
F^{\bbC_{n}}=\left(1-\frac{n}{N} \right)I_{[0,\infty]}+\frac{n}{N} F^{ C_{n}}, \label{3.1}
\end{align}
Therefore, we  obtain that
\begin{align}
\bbm_{n}=-\dfrac{1-c_{n}}{z}+c_{n}m_{n},
\label{3.2}
\end{align}
where $ \bbm_{n}(z)=m_{F^{\bbC_{n}}(z)} $ and $ m_{n}(z)=m_{F^{C_{n}}}(z) $.

For any $ j=1,2,\dots,N $,  we define $ y_{j}=\frac{1}{\sqrt{N}} T_{n}^{\frac{1}{2}} \left( r_{j}+x_{j}\right) $, where $ r_{j} $ and $ x_{j} $ are the columns of $ R_{n} $ and $ X_{n} $, respectively. 
So that $ C_{n}=\sum_{j = 1}^N y_j y_j^{*}$ and 
$ \frac{1}{N} R_{n}R_{n}^{*}=\frac{1}{N}\sum_{j=1}^N r_j r_j^{*} $.
Let $ D=C_{n}-zI $, $ B=K_{n}-zI $, where
$$ K_{n}=\left( \dfrac{1}{1+c_{n} g_{n}} \right) \frac{1}{N}T_{n}^{\frac{1}{2}}R_{n}R_{n}^{*} T_{n}^{\frac{1}{2}}-z\bbm_{n}(z)T_{n}, $$ 
and
$ g_{n}=\frac{1}{n}\mathrm{tr}\left(C_{n}-zI \right)^{-1} T_{n} $. 
Denote $C_{j}=C_{n}-y_{j}y_{j}^{*} $ and $ D_{j}=D-y_{j}y_{j}^{*}=C_{j}-zI $.
Multiplying $ D^{-1} $ to the right on both sides of $ D+zI=\sum_{j=1}^N y_j y_j^{*} $  and by Lemma \ref{lemma2.7}, we have that
\begin{equation*}
I+zD^{-1}=\sum_{j = 1}^N \dfrac{1}{1+y_{j}^{*}D_{j}^{-1}y_{j}}y_{j}y_{j}^{*}D_{j}^{-1}.
\end{equation*}
Then taking the trace on both sides of the above equation,  we obtain
\begin{align*}
\begin{split}
c_{n}+zc_{n}m_{n}=\dfrac{1}{N} \sum_{j=1}^N \dfrac{y_{j}^{*}D_{j}^{-1}y_{j}}{1+y_{j}^{*}D_{j}^{-1}y_{j}} =1-\dfrac{1}{N}\sum_{j=1}^N \dfrac{1}{1+y_{j}^{*}D_{j}^{-1}y_{j}},
\end{split}
\end{align*}
which together with  \eqref{3.2} implies that 
\begin{align}
\begin{split}
\bbm_{n}=-\dfrac{1}{N} \sum_{j=1}^N \dfrac{1} {z(1+y_{j}^{*}D_{j}^{-1}y_{j})}.\label{3.3}
\end{split}
\end{align}
Analogous to  (2.3) of \cite{silverstein1995strong}, for each $ j $, we have that
\begin{align*}
\begin{split}
\Im y_{j}^{*}(\frac{1}{z} C_{j}-I) ^{-1} y_{j} 
&=\frac{1}{2i} y_{j}^{*}((\frac{1}{z} C_{j}-I)^{-1}-(\frac{1}{\overline z} C_{j}-I)^{-1})y_{j}\\
&=\frac{v}{|z|^{2}}y_{j}^{*}(\frac{1}{z} C_{j}-I)^{-1}C_{j}(\frac{1}{\overline z} C_{j}-I)^{-1}y_{j} \geq 0.
\end{split}
\end{align*}
Therefore, we can get 
\begin{align}
\begin{split}
\dfrac{1}{\lvert z(1+\frac{1}{N} (r_{j}+x_{j})^{*} T_{n}^{\frac{1}{2}}D_{j}^{-1}T_{n}^{\frac{1}{2}} (r_{j}+x_{j}))\rvert}\leq\dfrac{1}{v}. \label{3.4}
\end{split}
\end{align}

In the next stage of proof, by decomposing the difference of the inverse and expanding the intermediate factor, we can get 
\begin{align} \label{3.5}
& \ \quad B^{-1}-D^{-1}  \\ \nonumber
&=B^{-1}(D-B)D^{-1} =B^{-1} (C_{n}-K_{n})D^{-1}  \\  \nonumber
&=B^{-1}\left(\dfrac{c_{n}g_{n}}{1+c_{n}g_{n}}\frac{1}{N}T_{n}^{\frac{1}{2}}R_{n}R_{n}^{*}
T_{n}^{\frac{1}{2}}
+\frac{1}{N}T_{n}^{\frac{1}{2}}X_{n}R_{n}^{*}T_{n}^{\frac{1}{2}}
+\frac{1}{N}T_{n}^{\frac{1}{2}}R_{n}X_{n}^{*}T_{n}^{\frac{1}{2}}
+\frac{1}{N}T_{n}^{\frac{1}{2}}X_{n}X_{n}^{*}T_{n}^{\frac{1}{2}} +z\bbm_{n}(z)T_{n}\right)D^{-1}  \\ \nonumber
&=\sum_{j=1}^N B^{-1}\left(\dfrac{c_{n}g_{n}}{1+c_{n}g_{n}}
\frac{1}{N}T_{n}^{\frac{1}{2}}r_{j}r_{j}^{*}T_{n}^{\frac{1}{2}}
+\frac{1}{N}T_{n}^{\frac{1}{2}}x_{j}r_{j}^{*}T_{n}^{\frac{1}{2}}
+\frac{1}{N}T_{n}^{\frac{1}{2}}r_{j}x_{j}^{*}T_{n}^{\frac{1}{2}} 
+\frac{1}{N}T_{n}^{\frac{1}{2}}x_{j}x_{j}^{*}T_{n}^{\frac{1}{2}} 
+\frac{1}{N} z \bbm_{n}(z)T_{n} \right) D^{-1}    \\ \nonumber
&=\sum_{j =1}^N\left( \dfrac{c_{n}g_{n}}{1+c_{n}g_{n}}B^{-1}\frac{1}{N}T_{n}^{\frac{1}{2}}r_{j}r_{j}^{*}
T_{n}^{\frac{1}{2}}D^{-1} +B^{-1}\frac{1}{N}T_{n}^{\frac{1}{2}}x_{j}r_{j}^{*}T_{n}^{\frac{1}{2}}D^{-1} 
+B^{-1}\frac{1}{N}T_{n}^{\frac{1}{2}}r_{j}x_{j}^{*}T_{n}^{\frac{1}{2}}D^{-1} 
\right.  \\ \nonumber &\left. \quad \quad \quad 
+B^{-1}\frac{1}{N}T_{n}^{\frac{1}{2}}x_{j}x_{j}^{*}T_{n}^{\frac{1}{2}}D^{-1} 
+B^{-1}\frac{1}{N}z\bbm_{n}(z)T_{n}^{\frac{1}{2}}T_{n}^{\frac{1}{2}}D^{-1} \right), \nonumber
\end{align}
where $ g_{n}=\frac{1}{n}\mathrm{tr}(C_{n}-zI)^{-1}T_{n}.$

Accoding to the fact \eqref{3.3}, we take the trace of both side \eqref{3.5} and divide it by 
$ n $. It can be seen that
\begin{align}
\begin{split}
& \quad \ \frac{1}{n}\mathrm{tr}\left(K_{n}-zI\right)^{-1}-\frac{1}{n}\mathrm{tr}\left(C_{n}-zI\right)^{-1} \\ 
&=\frac{1}{n}\sum_{j =1}^N \mathrm{tr} \left( \dfrac{c g_{n}}{1+cg_{n}}B^{-1} \frac{1}{N} T_{n}^{\frac{1}{2}} r_{j}r_{j}^{*}T_{n}^{\frac{1}{2}}D^{-1} 
+B^{-1}\frac{1}{N}T_{n}^{\frac{1}{2}} x_{j}r_{j}^{*}T_{n}^{\frac{1}{2}}D^{-1}
+B^{-1}\frac{1}{N}T_{n}^{\frac{1}{2}} r_{j}x_{j}^{*}T_{n}^{\frac{1}{2}}D^{-1} 
\right.\\&\left. \quad \quad \quad \quad 
+B^{-1}\frac{1}{N}T_{n}^{\frac{1}{2}} x_{j}x_{j}^{*}T_{n}^{\frac{1}{2}}D^{-1} 
+B^{-1}\frac{1}{N}z\bbm_{n}(z)T_{n}^{\frac{1}{2}}T_{n}^{\frac{1}{2}}D^{-1} \right) \\
&=\frac{1}{n}\sum_{j =1}^N\left(
\dfrac{c_{n}g_{n}}{1+c_{n}g_{n}}\frac{1}{N}r_{j}^{*}T_{n}^{\frac{1}{2}}D^{-1}B^{-1}T_{n}^{\frac{1}{2}}r_{j}
+\frac{1}{N}r_{j}^{*}T_{n}^{\frac{1}{2}}D^{-1}B^{-1}T_{n}^{\frac{1}{2}}x_{j}
+\frac{1}{N}x_{j}^{*}T_{n}^{\frac{1}{2}}D^{-1}B^{-1}T_{n}^{\frac{1}{2}}r_{j}
\right.\\&\left. \quad \quad \quad \quad 
+\frac{1}{N}x_{j}^{*}T_{n}^{\frac{1}{2}}D^{-1}B^{-1}T_{n}^{\frac{1}{2}}x_{j}
-\dfrac{1}{1+\frac{1}{N}(r_{j}+x_{j})^{*}T_{n}^{\frac{1}{2}}D_{j}^{-1} T_{n}^{\frac{1}{2}}(r_{j}
+x_{j})}\frac{1}{N} \mathrm{tr} T_{n}^{\frac{1}{2}} D^{-1} B^{-1}T_{n}^{\frac{1}{2}} \right). \label{3.6}
\end{split}
\end{align}

In the light of the above definition 
$ D= D_{j}+ y_{j} y_{j}^{*}$ and Lemma \ref{lemma2.7}, we can obtain the identity
\begin{align}
D^{-1}=D_{j}^{-1}-\frac{1}{\alpha^{j}}\frac{1}{N}D_{j}^{-1}T_{n}^{\frac{1}{2}}(r_{j}+x_{j})(r_{j}+x_{j})^{*}
T_{n}^{\frac{1}{2}}D_{j}^{-1},  \label{3.7}
\end{align}
where 
$\alpha^{j}=1+\frac{1}{N} (r_{j}+x_{j})^{*} T_{n}^{\frac{1}{2}}D_{j}^{-1}T_{n}^{\frac{1}{2}}(r_{j}+x_{j}). $ 

Applying the identity $ \eqref{3.7} $, we can write $ \eqref{3.6} $ as
\begin{align}
\frac{1}{n} \mathrm{tr} \left(K-zI \right)^{-1} -\frac{1}{n}\mathrm{tr} \left(C_{n}-zI \right)^{-1}
=\frac{1}{n}\sum_{j=1}^N \left( Q_{1nj}+Q_{2nj}+Q_{3nj}+Q_{4nj}-Q_{5nj} \right), \label{3.8}
\end{align}
where
\begin{align*}
\begin{split}
Q_{1nj}
&=\dfrac{c_{n}g_{n}}{1+c_{n}g_{n}}
\left(\frac{1}{N}r_{j}^{*}T_{n}^{\frac{1}{2}}D_{j}^{-1}B^{-1}T_{n}^{\frac{1}{2}}r_{j}
-\dfrac{\frac{1}{N}r_{j}^{*}T_{n}^{\frac{1}{2}}D_{j}^{-1} y_{j} y_{j}^{*} D_{j}^{-1} B^{-1}
T_{n}^{\frac{1}{2}} r_{j}}{1+y_{j}^{*}D_{j}^{-1}y_{j}} \right)\\
&=\frac{1}{\alpha^{j}}\dfrac{c_{n}g_{n}}{1+c_{n}g_{n}}\left(\alpha^{j}\frac{1}{N} r_{j}^{*} T_{n}^{\frac{1}{2}}D_{j}^{-1} B^{-1} T_{n}^{\frac{1}{2}}r_{j}
-\frac{1}{N}r_{j}^{*}T_{n}^{\frac{1}{2}}D_{j}^{-1}\frac{1}{N}T_{n}^{\frac{1}{2}}\left(r_{j}+x_{j}\right)
\left( r_{j}+x_{j} \right)^{*} T_{n}^{\frac{1}{2}} D_{j}^{-1} B^{-1}T_{n}^{\frac{1}{2}}r_{j}\right),
\\
Q_{2nj}
&=\frac{1}{N}x_{j}^{*}T_{n}^{\frac{1}{2}}D_{j}^{-1}B^{-1}T_{n}^{\frac{1}{2}}r_{j}
-\dfrac{\frac{1}{N}x_{j}^{*}T_{n}^{\frac{1}{2}}D_{j}^{-1} y_{j}y_{j}^{*} D_{j}^{-1}B^{-1}T_{n}^{\frac{1}{2}} r_{j}}{1+y_{j}^{*}D_{j}^{-1}y_{j}}\\
&=\frac{1}{\alpha^{j}}\left(\alpha^{j}\frac{1}{N}x_{j}^{*}T_{n}^{\frac{1}{2}}D_{j}^{-1}B^{-1}
T_{n}^{\frac{1}{2}}r_{j}
-\frac{1}{N}x_{j}^{*}T_{n}^{\frac{1}{2}}D_{j}^{-1}\frac{1}{N}T_{n}^{\frac{1}{2}}\left(r_{j}+x_{j}\right)
\left( r_{j}+x_{j}\right)^{*}T_{n}^{\frac{1}{2}}D_{j}^{-1}B^{-1}T_{n}^{\frac{1}{2}}r_{j}\right),
\\
Q_{3nj}
&=\frac{1}{N}r_{j}^{*}T_{n}^{\frac{1}{2}}D_{j}^{-1} B^{-1}T_{n}^{\frac{1}{2}}x_{j}-\dfrac{\frac{1}{N} r_{j}^{*}T_{n}^{\frac{1}{2}} D_{j}^{-1} y_{j}y_{j}^{*}D_{j}^{-1}B^{-1}
T_{n}^{\frac{1}{2}} x_{j}}{1+y_{j}^{*}D_{j}^{-1}y_{j}}\\
&=\frac{1}{\alpha^{j}}\left(\alpha^{j} \frac{1}{N} r_{j}^{*}T_{n}^{\frac{1}{2}}
D_{j}^{-1}B^{-1}T_{n}^{\frac{1}{2}}x_{j}-\frac{1}{N}r_{j}^{*}T_{n}^{\frac{1}{2}}D_{j}^{-1}
\frac{1}{N}T_{n}^{\frac{1}{2}}\left( r_{j}+x_{j}\right)\left( r_{j}+x_{j}\right)^{*} T_{n}^{\frac{1}{2}}D_{j}^{-1}B^{-1} T_{n}^{\frac{1}{2}}x_{j}\right),
\\
Q_{4nj}
&=\frac{1}{N}x_{j}^{*}T_{n}^{\frac{1}{2}}D_{j}^{-1}B^{-1}T_{n}^{\frac{1}{2}}x_{j}
-\dfrac{\frac{1}{N}x_{j}^{*}T_{n}^{\frac{1}{2}}D_{j}^{-1}y_{j}y_{j}^{*}D_{j}^{-1}B^{-1} 
T_{n}^{\frac{1}{2}} x_{j}}{1+y_{j}^{*} D_{j}^{-1}y_{j}}\\
&=\frac{1}{\alpha^{j}}\left(\alpha^{j}\frac{1}{N}x_{j}^{*}T_{n}^{\frac{1}{2}}D_{j}^{-1} B^{-1}T_{n}^{\frac{1}{2}}x_{j}
-\frac{1}{N}x_{j}^{*}T_{n}^{\frac{1}{2}}D_{j}^{-1}\frac{1}{N}T_{n}^{\frac{1}{2}}
\left( r_{j}+x_{j}\right)\left(r_{j}+x_{j}\right)^{*} T_{n}^{\frac{1}{2}}D_{j}^{-1}B^{-1}
T_{n}^{\frac{1}{2}}x_{j}\right),
\end{split}
\end{align*}
and
\begin{align*}
Q_{5nj} &=\frac{1}{\alpha^{j}}\frac{1}{N}\mathrm{tr}T_{n}^{\frac{1}{2}}D^{-1}B^{-1}T_{n}^{\frac{1}{2}}.
\end{align*}

For convenience, we make the following definitions,
$$\rho_{j}=\frac{1}{N} r_{j}^{*}T_{n}^{\frac{1}{2}}D^{-1}_{j}T_{n}^{\frac{1}{2}}r_{j}, \quad \hat{\rho}_{j}=\frac{1}{N} r_{j}^{*}T_{n}^{\frac{1}{2}}D^{-1}_{j}B^{-1}T_{n}^{\frac{1}{2}}r_{j}, $$
$$\beta_{j}=\frac{1}{N}r_{j}^{*}T_{n}^{\frac{1}{2}}D^{-1}_{j}T_{n}^{\frac{1}{2}}x_{j}, \quad \hat{\beta}_{j}=\frac{1}{N}r_{j}^{*}T_{n}^{\frac{1}{2}}D^{-1}_{j}B^{-1}T_{n}^{\frac{1}{2}}x_{j}, $$
$$\omega_{j}=\frac{1}{N}x_{j}^{*}T_{n}^{\frac{1}{2}}D^{-1}_{j}T_{n}^{\frac{1}{2}}x_{j}, \quad \hat{\omega}_{j}=\frac{1}{N}x_{j}^{*}T_{n}^{\frac{1}{2}}D^{-1}_{j}B^{-1}T_{n}^{\frac{1}{2}}x_{j}, $$
$$\gamma_{j}=\frac{1}{N}x_{j}^{*}T_{n}^{\frac{1}{2}}D^{-1}_{j}T_{n}^{\frac{1}{2}}r_{j}, \quad
\hat{\gamma}_{j}=\frac{1}{N}x_{j}^{*}T_{n}^{\frac{1}{2}}D^{-1}_{j}B^{-1}T_{n}^{\frac{1}{2}}r_{j}, $$
so it is easy to get that
$\alpha^{j}=1+\rho_{j}+\beta_{j}+\gamma_{j}+\omega_{j} $, and the terms
$ Q_{inj}(i=1,\cdots,5) $ can be expressed as
\begin{align*}
&Q_{1nj}=\frac{1}{\alpha^{j}}\dfrac{c_{n}g_{n}}{1+c_{n}g_{n}}
\left[(1+\gamma_{j}+\omega_{j})\hat{\rho}_{j}-(\rho_{j}+\beta_{j})\hat{\gamma}_{j} \right], 
\\
&Q_{2nj}=\frac{1}{\alpha^{j}}\left[
(1+\rho_{j}+\beta_{j})\hat{\gamma}_{j}-(\gamma_{j}+\omega_{j})\hat{\rho}_{j} \right], 
\\
&Q_{3nj}=\frac{1}{\alpha^{j}}\left[ (1+\gamma_{j}+\omega_{j})\hat{\beta}_{j}-(\rho_{j}+\beta_{j})\hat{\omega}_{j} \right], 
\\
&Q_{4nj}=\frac{1}{\alpha^{j}}\left[ (1+\rho_{j}+\beta_{j})\hat{\omega_{\textit{j}}}-(\gamma_{j}+\omega_{j})\hat{\beta}_{j} \right], 
\\
&Q_{5nj} =\frac{1}{\alpha^{j}}\frac{1}{N} \mathrm{tr} T_{n}^{\frac{1}{2}} D^{-1}B^{-1} T_{n}^{\frac{1}{2}}.
\end{align*}

Therefore, after simplificaiton, we get 
\begin{align}
\begin{split}
\eqref{3.8}&=\frac{1}{n} \mathrm{tr} \left(K -zI\right)^{-1} 
-\frac{1}{n} \mathrm{tr} \left(C_{n}-zI \right)^{-1}\\ 
&=\frac{1}{n}\sum_{j=1}^N \frac{1}{\alpha^{j}}\left[ \dfrac{1}{1+c_{n}g_{n}}(c_{n}g_{n}-\omega_{j}) \hat{\rho}_{j}-\dfrac{1}{1+c_{n}g_{n}}\gamma_{j}\hat{\rho}_{j}
+\dfrac{1}{1+c_{n}g_{n}}(\rho_{j}\hat{\gamma}_{j}+\beta_{j}\hat{\gamma_{j}})
\right.\\ &\left. \quad \quad \quad \quad 
+\hat{\beta}_{j}
+\hat{\gamma_{j}}+\hat{\omega}_{j}-\frac{1}{N}\mathrm{tr}T_{n}^{\frac{1}{2}}D^{-1}B^{-1}T_{n}^{\frac{1}{2}} \right] \\ 
&= \frac{1}{n} \sum_{j=1}^N \sum_{i=1}^4 \frac{1}{\alpha^{j}} W_{inj}, \label{3.9}
\end{split}
\end{align}
where 
$$  W_{1nj} =\dfrac{1}{1+c_{n}g_{n}}(c_{n}g_{n}-\omega_{j})\hat{\rho}_{j} , \quad 
W_{2nj}=\dfrac{1}{1+c_{n}g_{n}}(\rho_{j} \hat{\gamma}_{j} +\beta_{j} \hat{\gamma}_{j} -\gamma_{j} \hat{\rho}_{j} ), $$
$$ W_{3nj}=\hat{\beta}_{j} + \hat{\gamma}_{j}  , \quad \quad
W_{4nj}=\hat{\omega}_{j} -\frac{1}{N} \mathrm{tr} T_{n}^{\frac{1}{2}} D^{-1} B^{-1} T_{n}^{\frac{1}{2}}. $$

Next, we prove that 
\begin{equation*}
\begin{aligned} 
\frac{1}{n} \mathrm{tr} \left(K_{n}-zI \right)^{-1} -\frac{1}{n} \mathrm{tr}\left(C_{n}-zI \right)^{-1}
\longrightarrow  0 \quad  as \quad  n\rightarrow\infty .
\end{aligned}
\end{equation*}

The next expressions hold for any  $ j=1,2,\cdots,N,$ and any $ n $.
Similar to the definition \eqref{3.2}, $ B$ and $ g_{n} $,  we make the following definitions,
\begin{eqnarray*}
g_{j}&=&\frac{1}{n} \mathrm{tr} \left(C_{j}-zI \right)^{-1} T_{n},\\
m_{j}&=&m_{F^{C_{j}}}(z),\\
\bbm_{j}&=&-\dfrac{1-c_{n}}{z}+c_{n}m_{j},
\end{eqnarray*}
and
\begin{align*}
\begin{split}
B_{j}=\dfrac{1}{1+c_{n}\frac{1}{n}\mathrm{tr}\left(C_{j}-zI \right)^{-1}T_{n}} \frac{1}{N}T_{n}^{\frac{1}{2}}R_{n}R_{n}^{*}T_{n}^{\frac{1}{2}} -z \bbm_{j}(z)T_{n}-zI.
\end{split}
\end{align*}

According to \eqref{3.4} we can get 
\begin{align}
\begin{split}
\dfrac{1}{\lvert\alpha^{j}\rvert}\leq\dfrac{\lvert z \rvert}{v}, \label{3.10}
\end{split}
\end{align}
and using $ \lVert(A-zI)^{-1}\rVert \leq \dfrac{1}{v} $ for any Hermitian matrix $ A $, so we can get 
\begin{align}
\begin{split}
\lVert D_{j}^{-1}\rVert \leq \frac{1}{v}. \label{3.11}
\end{split}
\end{align}

In order to find a suitable bound of $ \dfrac{1}{\lvert 1+c_{n}g_{n}\rvert} $, we observe that $ T_{n} $ is spectral norm bounded and by \eqref{2.3}, we can get 
\begin{align}
\begin{split}
\dfrac{1}{\lvert1+c_{n}\frac{1}{n}\mathrm{tr}\left(C_{n}-zI \right)^{-1} T_{n}\rvert} 
\leq \dfrac{\lvert z \rvert}{v}, \label{3.12}
\end{split}
\end{align}
and similarly
\begin{align}
\begin{split}
\dfrac{1}{\lvert1+c_{n}\frac{1}{n}\mathrm{tr}\left(C_{j}-zI \right)^{-1} T_{n}\rvert} 
\leq \dfrac{\lvert z \rvert}{v}. \label{3.13}
\end{split}
\end{align}

Suppose $ s $ and $ t $ are the eigenvalues of $ \dfrac{1}{N} RR^{*} $ and $ T $ respectively, so we can easily get the corresponding eigenvalues of $ B $, which is 
$$ \lambda_{B}= \frac{st}{1+c_{n}g_{n}}-(1+c_{n} m_{n} t)z +t (1-c_{n}) . $$
Then according to $ \eqref{2.3} $, we can see that  
\begin{align*}
|\lambda_{B}| \geq |\Im \lambda_{B}|=\left| 
\frac{st \Im g_{n}}{|1+c_{n}g_{n}|^{2}} +ct \Im m_{n}z + v  \right| \geq  v.
\end{align*}
Therefore, we get 
\begin{align}
\begin{split}
\lVert B^{-1}\rVert = \dfrac{1}{|\lambda_{B}|} \leq \frac{1}{v}, \label{3.14}
\end{split}
\end{align}
and similarly
\begin{align}
\begin{split}
\lVert B_{j}^{-1}\rVert \leq \frac{1}{v}. \label{3.15}
\end{split}
\end{align}

From Lemma \ref{lemma2.5}, we have
\begin{equation}
\max_{j \leqslant N}\left|m_{n}-m_{j}\right| \leq \frac{1}{n v}, \label{3.16}
\end{equation}
and
\begin{equation}
\max_{j \leqslant N}\left|g_{n}-g_{j}\right| \leq \frac{\|T_{n}\|}{ nv}\leq \frac{K}{nv}. \label{3.17}
\end{equation}

A simple application of Lemma \ref{lemma2.6} gives
\begin{equation}
\bbE\left\|x_{j}\right\|^{12} \leq  K n^{6}(\ln n)^{12}.  \label{3.18}
\end{equation}

Combining \eqref{3.14}, \eqref{3.15}, \eqref{3.12}, \eqref{3.13} and \eqref{3.16}, we get
\begin{equation}
\begin{aligned}
\left\|B_{j}^{-1}-B^{-1}\right\| 
&=\left\|B_{j}^{-1}\left(B-B_{j}\right) B^{-1}\right\| \leq \frac{1}{v^{2}}\left\|B-B_{j}\right\| \\ 
&=\frac{1}{v^{2}}\left\|\frac{ c_{n} \left|g_{j}-g_{n}\right|}{\left(1+ c_{n} g_{n}\right)
\left(1+ c_{n} g_{j}\right)} \frac{1}{N} T_{n}^{\frac{1}{2}}R_{n}R_{n}^{*}T_{n}^{\frac{1}{2}}
+z(\bbm_{j}(z)-\bbm_{n}(z))T_{n}\right\| \\ 
& \leq \frac{1}{v^{2}}\left(\frac{c_{n}\left|g_{j}-g_{n}\right|}{|1+c_{n}g_{n}| |1+c_{n}g_{j}|}
\left\|\frac{1}{N}T_{n}^{\frac{1}{2}} R_{n}R_{n}^{*}T_{n}^{\frac{1}{2}}\right\|
+\left\|z(\bbm_{j}(z)-\bbm_{n}(z))T_{n}\right\|\right) \\
& \leq  \frac{1}{n v^{3}}\left(\frac{ c_{n}|z|^{2}}{v^{2}} K +|z|\ln n\right)\\
&\leq K \frac{\ln n}{n}.  \label{3.19}
\end{aligned}
\end{equation}

For $ W_{1nj} $, using \eqref{3.7}, Lemma \ref{lemma3.1} and Lemma \ref{lemma2.6}, we can get
\begin{equation*}
\begin{aligned} 
\bbE \left|\omega_{j}- c_{n} g_{n}\right|^{6}
&=\frac{1}{N^{6}} \bbE \left|x^{*}_{j}T_{n}^{\frac{1}{2}}D_{j}^{-1}T_{n}^{\frac{1}{2}}x_{j} -\mathrm{tr}T_{n}^{\frac{1}{2}}D^{-1}T_{n}^{\frac{1}{2}}\right|^{6} \\ 
& \leq \frac{K}{N^{6}} \left(\bbE \left|x_{j}^{*} T_{n}^{\frac{1}{2}} D_{j}^{-1} T_{n}^{\frac{1}{2}}x_{j} -\mathrm{tr}T_{n}^{\frac{1}{2}}D_{j}^{-1} T_{n}^{\frac{1}{2}} \right|^{6}
+\bbE \left|\mathrm{tr} \left(D_{j}^{-1}-D^{-1}\right) T_{n}\right|^{6}\right) \\ 
& \leq \frac{K}{N^{3}}(\ln n)^{12}+\frac{K}{N^{6}} \\ 
& \leq K \frac{(\ln n)^{12}}{N^{3}}.
\end{aligned}
\end{equation*}

For $ W_{2nj} $ , using \eqref{3.11}, Lemma \ref{lemma2.5} and the Cauchy-Schward inequality we get
\begin{equation*}
\begin{aligned} 
\bbE \left|\gamma_{j}\right|^{12}
&=\bbE\left|\frac{1}{N}x_{j}^{*}T^{\frac{1}{2}} D_{j}^{-1} T_{n}^{\frac{1}{2}} r_{j} \right|^{12} =\frac{1}{N^{12}}\bbE \left|x_{j}^{*} T_{n}^{\frac{1}{2}}D_{j}^{-1} T_{n}^{\frac{1}{2}} r_{j} r_{j}^{*} T_{n}^{\frac{1}{2}}D_{j}^{-1*}T_{n}^{\frac{1}{2}}x_{j}\right|^{6}\\
&\leq \frac{K}{N^{12}} \bbE \left|x_{j}^{*}T_{n}^{\frac{1}{2}} D_{j}^{-1}T_{n}^{\frac{1}{2}} r_{j} r_{j}^{*}T_{n}^{\frac{1}{2}}D_{j}^{-1*}T_{n}^{\frac{1}{2}}x_{j}-\mathrm{tr} T_{n}^{\frac{1}{2}}D_{j}^{-1} T_{n}^{\frac{1}{2}}r_{j} r_{j}^{*}T_{n}^{\frac{1}{2}} D_{j}^{-1*} T_{n}^{\frac{1}{2}} \right|^{6}\\
&+\frac{K}{N^{12}} \bbE \left|r_{j}^{*}T_{n}^{\frac{1}{2}}D_{j}^{-1*} T_{n}^{\frac{1}{2}}T_{n}^{\frac{1}{2}} D_{j}^{-1}T_{n}^{\frac{1}{2}} r_{j}\right|^{6}\\
&\leq \frac{K}{N^{3}}(\ln n)^{12}+\frac{K}{N^{6}}\\
&\leq \frac{K}{N^{3}}(\ln n)^{12},
\end{aligned}
\end{equation*}
and similarly
\begin{equation*}
\bbE \left|\beta_{j}\right|^{12} \leq \frac{K}{N^{3}}(\ln n)^{12}.
\end{equation*}
Using \eqref{3.11}, \eqref{3.14}, condition (2) and the Cauchy-Schward inequality we get
\begin{equation*}
\left|\hat{\rho}_{j}\right| \leq K ,
\end{equation*}
and
\begin{equation*}
\left|\rho_{j}\right| \leq K.
\end{equation*} 

For $ W_{3nj} $, using \eqref{3.11}, \eqref{3.15}, \eqref{3.18}, \eqref{3.19}, Lemma \ref{lemma2.7} and the Cauchy-Schward inequality we get
\begin{align*} 
\bbE \left|\hat{\gamma}_{j}\right|^{12}&=\bbE \left|\frac{1}{N} x_{j}^{*} T_{n}^{\frac{1}{2}} D_{j}^{-1} B^{-1} T_{n}^{\frac{1}{2}} r_{j}\right|^{12}\\
&\leq \frac{K}{N^{12}} \bbE \left|x_{j}^{*}T_{n}^{\frac{1}{2}} D_{j}^{-1}\left(B^{-1}-B_{j}^{-1}\right)
T_{n}^{\frac{1}{2}} r_{j}\right|^{12}+\frac{K}{N^{12}} \bbE \left| \left|X_{j}^{*} T_{n}^{\frac{1}{2}} D_{j}^{-1} B_{j}^{-1}T_{n}^{\frac{1}{2}} r_{j}\right|^{2}\right| ^{6}\\
&\leq \frac{K}{N^{12}} \bbE
||x_{j}||^{12} ||r_{j}||^{12}\||D_{j}^{-1}||^{12}||T_{n}||^{12}||B^{-1}-B_{j}^{-1}||^{12}\\
&+\frac{K}{N^{12}} \bbE \left|x_{j}^{*}T_{n}^{\frac{1}{2}} D_{j}^{-1} B_{j}^{-1} T_{n}^{\frac{1}{2}}
r_{j} r_{j}^{*} T_{n}^{\frac{1}{2}}B_{j}^{-1*} D_{j}^{-1*}T_{n}^{\frac{1}{2}}x_{j}
-\mathrm{tr}T_{n}^{\frac{1}{2}} D_{j}^{-1} B_{j}^{-1} T_{n}^{\frac{1}{2}}r_{j} r_{j}^{*} T_{n}^{\frac{1}{2}} B_{j}^{-1*} D_{j}^{-1*}T_{n}^{\frac{1}{2}} \right.\\
&\left.+r_{j}^{*} T_{n}^{\frac{1}{2}}B_{j}^{-1*} D_{j}^{-1*}T_{n}^{\frac{1}{2}}T_{n}^{\frac{1}{2}} D_{j}^{-1} B_{j}^{-1}T_{n}^{\frac{1}{2}} r_{j}\right|^{6}\\
&\leq \frac{K}{N^{12}}(\ln n)^{12}+\frac{K}{N^{12}} \bbE \left|x_{j}^{*}T_{n}^{\frac{1}{2}} D_{j}^{-1} B_{j}^{-1} r_{j} r_{j}^{*}B_{j}^{-1*} D_{j}^{-1*}T_{n}^{\frac{1}{2}}x_{j}
-\mathrm{tr}T_{n}^{\frac{1}{2}}D_{j}^{-1}B_{j}^{-1}r_{j}r_{j}^{*}
B_{j}^{-1*}D_{j}^{-1*}T_{n}^{\frac{1}{2}} \right|^{6}\\
&+\frac{K}{N^{12}} \bbE \left|r_{j}^{*} B_{j}^{-1*} D_{j}^{-1*}T_{n}^{\frac{1}{2}}T_{n}^{\frac{1}{2}} D_{j}^{-1} B_{j}^{-1} r_{j}\right|^{6}\\
&\leq \frac{K}{N^{3}}(\ln n)^{12}.
\end{align*}
and similarly
\begin{equation*}
\bbE \left|\hat{\beta}_{j}\right|^{12}\leq \frac{K}{N^{3}}(\ln n )^{12}.
\end{equation*}

For $ W_{4nj} $, using \eqref{3.11}, \eqref{3.14}, \eqref{3.15}, \eqref{3.18}, \eqref{3.19} and Lemma \ref{lemma2.5} and Lemma \ref{lemma2.6}, we can get 
\begin{equation*}
\begin{aligned}
&\bbE \left|\hat{\omega}_{j}-\frac{1}{N}\mathrm{tr}T_{n}^{\frac{1}{2}}D^{-1}B^{-1}T_{n}^{\frac{1}{2}} \right|^{6}\\
&=\frac{1}{N^{6}} \bbE\left|x_{j}^{*}T_{n}^{\frac{1}{2}} D_{j}^{-1} B^{-1}T_{n}^{\frac{1}{2}} x_{j}-\mathrm{tr}T_{n}^{\frac{1}{2}} D^{-1} B^{-1}T_{n}^{\frac{1}{2}}\right|^{6}\\
&\leq \frac{K}{N^{6}} \bbE\left|x_{j}^{*}T_{n}^{\frac{1}{2}} D_{j}^{-1}\left(B^{-1}-B_{j}^{-1}\right)
T_{n}^{\frac{1}{2}} x_{j}\right|^{6}
+\frac{K}{N^{6}}\bbE\left|x_{j}^{*} T_{n}^{\frac{1}{2}}D_{j}^{-1} B_{j}^{-1} x_{j}-\mathrm{tr} T_{n}^{\frac{1}{2}}D_{j}^{-1} B_{j}^{-1}T_{n}^{\frac{1}{2}}\right|^{6} \\
&+\frac{K}{N^{6}} \bbE\left|\mathrm{tr} T_{n}^{\frac{1}{2}} D_{j}^{-1}\left(B_{j}^{-1}-B^{-1}\right)
T_{n}^{\frac{1}{2}}\right|^{6}+\frac{K}{N^{6}} \bbE\left|\mathrm{tr}T_{n}^{\frac{1}{2}}\left(D_{j}^{-1}-D^{-1}\right) B^{-1}T_{n}^{\frac{1}{2}}\right|^{6}\\
&\leq K \dfrac{(\ln n)^{12}}{N^{3}}.
\end{aligned}
\end{equation*} 

Thus from the Cauchy-Schwarz inequality and the above bounds we get
$$ \bbE \left|\beta_{j} \hat{\gamma}_{j}\right|^{6} \leq K \frac{(\ln n)^{12}}{N^{3}}, \quad 
\bbE \left|\rho_{j} \hat{\gamma}_{j}\right|^{6} \leq  K \frac{(\ln n)^{6}}{N^{\frac{3}{2}}}, \quad 
\bbE \left|\hat{\rho_{j}} \gamma_{j}\right|^{6} \leq  K \frac{(\ln n)^{6}}{N^{\frac{3}{2}}}. $$

Therefore, we have almost surely, as $n \rightarrow \infty $,
\begin{equation}
\begin{aligned} 
\max_{j\leq N}\max\left\lbrace\left|\frac{\left(c_{n}g_{n}-\omega_{j}\right)\hat{\rho}_{j}}{1+c_{n} g_{n}} \right|, \left|\frac{\gamma_{j} \hat{\rho}_{j}}{1+c_{n}g_{n}}\right|, |\hat{\beta}_{j}|, | \hat{\gamma}_{j}|, \left|\frac{\rho_{j} \hat{\gamma}_{j}}{1+c_{n} g_{n}}\right|, 
\left|\frac{\beta_{j} \hat{\gamma}_{j}}{1+c_{n} g_{n}}\right|,\left|\hat{\omega}_{j}-\frac{1}{N} 
\mathrm{tr} T_{n}^{\frac{1}{2}} D^{-1} B^{-1} T_{n}^{\frac{1}{2}} \right|  \right\rbrace   \stackrel{a.s.}{\longrightarrow} 0. \label{3.20}
\end{aligned}
\end{equation}

Now according to $ \eqref{3.10} $ and $ \eqref{3.20} $, as $ n\rightarrow\infty $, we can get 
\begin{equation}
\begin{aligned} 
\frac{1}{n} \mathrm{tr} \left(K_{n}-zI\right)^{-1} -\frac{1}{n}\mathrm{tr}\left(C_{n}-zI\right)^{-1} \rightarrow 0.  \label{3.21}
\end{aligned}
\end{equation}

%
%
It is argued that $ \{F^{C_{n}}\} $ is tight, the quantity 
$$ \varsigma=\inf_{n\to \infty} \Im m_{F^{C_{n}}}(z) \geq \inf_{n\to \infty} \int 
\dfrac{v d F^{C_{n}} (\lambda)}{2(\lambda^{2}+u^{2})+v^{2}} $$
is almost surely positive.

For each fixed $z$, $ \{m_{n}(z)\} $ is a bounded sequence (bounded in absolute value by 
$ \dfrac{1}{v} $). 
And because of $ \eqref{3.21}$, we know that $ \frac{1}{n} \mathrm{tr} \left(K_{n}-zI\right)^{-1} $ is bounded. 

For any subsequence $ n_{i} $, there is a subsequence $ \{ n^{'}_{i} \} $ of 
$ \{ n_{i} \} $ that $ m_{n^{'}_{i}}(z) $ converges to a limit $ m $. And $ m $ should satisfiy $ \eqref{2.1} $. 
Then as $ n_{i}\rightarrow\infty $, one has 
\begin{align*}
&\frac{1}{n_{i}} \mathrm{tr} (K_{n_{i}}-zI)^{-1}
=\int\dfrac{d H_{n}(s,t)}{\frac{st}{1+c_{n_{i}}g_{n_{i}}}-(1+c_{n_{i}}m_{n_{i}}t)z
+ t(1-c_{n_{i}})} \longrightarrow \int \frac{dH(s,t)}{\frac{st}{1+cg}-(1+cmt)z+t(1-c)} .
\end{align*}

We conclude that for each $ z $ with $ \Im z > 0 $, as $ n \rightarrow \infty $,
$ m_{n}(z)\stackrel{a.s.}{\longrightarrow} m(z) $, and $ m(z) $  satisfies the equation system $ \eqref{2.1} $.

\subsection{Unique solution to \eqref{2.1} } \label{us}

We now prove that a certain type of solution to \eqref{2.1}  is unique.
\begin{thm} \label{thm2}
	Let $ z = u + iv \in \mathbb{C}^{+} $, $m_{1}, m_{2} \in \mathbb{C}^{+} $ and  
	$ g_{1},g_{2} \in \mathbb{C}^{+} $. If both $ m_{1} $, $ g_{1} $ and $ m_{2} $, $ g_{2}$ satisfy 
	$ \eqref{2.1} $ respectively, then 
	$ (m_{1},g_{1} )= (m_{2},g_{2} ) $.
\end{thm}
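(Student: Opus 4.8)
The plan is to argue by contradiction. Suppose $(m_1,g_1)$ and $(m_2,g_2)$ both lie in $\mathbb{C}^+\times\mathbb{C}^+$, both satisfy \eqref{2.1}, but $(m_1,g_1)\neq(m_2,g_2)$. Write the denominators occurring in \eqref{2.1} as
\[
d_i=d_i(s,t)=\frac{st}{1+cg_i}-(1+cm_it)z+t(1-c),\qquad i=1,2,
\]
so that $m_i=\int \frac{dH(s,t)}{d_i}$ and $g_i=\int \frac{t\,dH(s,t)}{d_i}$; since these integrals are absolutely convergent, $d_i\neq 0$ for $H$-almost every $(s,t)$ and all integrals below are finite. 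Subtracting the two copies of each equation and using
\[
\frac{1}{d_1}-\frac{1}{d_2}=\frac{d_2-d_1}{d_1d_2},\qquad d_2-d_1=\frac{cst(g_1-g_2)}{(1+cg_1)(1+cg_2)}+czt(m_1-m_2),
\]
produces the homogeneous linear system
\[
\begin{pmatrix}m_1-m_2\\ g_1-g_2\end{pmatrix}=\mathbf{M}\begin{pmatrix}m_1-m_2\\ g_1-g_2\end{pmatrix},\qquad
\mathbf{M}=\begin{pmatrix}\displaystyle cz\!\int\!\frac{t\,dH}{d_1d_2}&\displaystyle\frac{c}{(1+cg_1)(1+cg_2)}\!\int\!\frac{st\,dH}{d_1d_2}\\[2.4ex]\displaystyle cz\!\int\!\frac{t^2\,dH}{d_1d_2}&\displaystyle\frac{c}{(1+cg_1)(1+cg_2)}\!\int\!\frac{st^2\,dH}{d_1d_2}\end{pmatrix}.
\]
Hence $(m_1-m_2,\,g_1-g_2)^{\top}$ is a nonzero fixed vector of $\mathbf{M}$, so $1$ is an eigenvalue of $\mathbf{M}$, and it suffices to prove that the spectral radius $\rho(\mathbf{M})<1$ to reach a contradiction.

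The estimates rest on imaginary parts. A direct computation gives
\[
\Im d_i=-\frac{cst\,\Im g_i}{|1+cg_i|^2}-ct\,\Im(zm_i)-v,
\]
and a preliminary step I would carry out is to establish $\Im(zm_i)\ge 0$ — equivalently that the companion quantity $\bbm_i:=-\frac{1-c}{z}+cm_i$ satisfies $\Im(z\bbm_i)\ge 0$, which is precisely the property \eqref{2.3} already used for $F^{C_n}$ — so that $\Im d_i<0$ identically and all coefficients below are nonnegative. Taking imaginary parts in the two equations of \eqref{2.1} then yields
\[
\Im m_i=\frac{c\,\Im g_i}{|1+cg_i|^2}\!\int\!\frac{st\,dH}{|d_i|^2}+c\,\Im(zm_i)\!\int\!\frac{t\,dH}{|d_i|^2}+v\!\int\!\frac{dH}{|d_i|^2},
\]
\[
\Im g_i=\frac{c\,\Im g_i}{|1+cg_i|^2}\!\int\!\frac{st^2\,dH}{|d_i|^2}+c\,\Im(zm_i)\!\int\!\frac{t^2\,dH}{|d_i|^2}+v\!\int\!\frac{t\,dH}{|d_i|^2},
\]
which exhibit $\Im m_i$ and $\Im g_i$ as sums, with strictly positive coefficients, of exactly the four integrals $\int t\,dH/|d_i|^2$, $\int t^2\,dH/|d_i|^2$, $\int st\,dH/|d_i|^2$, $\int st^2\,dH/|d_i|^2$, together with the strictly positive leftover terms $v\int dH/|d_i|^2$ and $v\int t\,dH/|d_i|^2$.

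With these in hand I would bound the entries of $\mathbf{M}$ by the Cauchy--Schwarz inequality: for any nonnegative $f$,
\[
\Bigl|\int\frac{f\,dH}{d_1d_2}\Bigr|\le\Bigl(\int\frac{f\,dH}{|d_1|^2}\Bigr)^{1/2}\Bigl(\int\frac{f\,dH}{|d_2|^2}\Bigr)^{1/2},
\]
applied with $f\in\{t,t^2,st,st^2\}$, so that $\rho(\mathbf{M})\le\rho(|\mathbf{M}|)\le\rho(\mathbf{N})$, where $\mathbf{N}$ is the nonnegative $2\times2$ matrix whose entries are the resulting products of square roots. To prove $\rho(\mathbf{N})<1$ the entries cannot be controlled in isolation (they need not individually be less than $1$); instead I would combine the two imaginary-part identities with the $2\times2$ stability criterion ($|\det\mathbf{N}|<1$ and $\mathrm{tr}\,\mathbf{N}<1+\det\mathbf{N}$), using that the off-diagonal products partially cancel inside $\det\mathbf{N}$ while the strictly positive leftover terms $v\int dH/|d_i|^2>0$ and $v\int t\,dH/|d_i|^2>0$ provide the slack that upgrades the non-strict Cauchy--Schwarz bound to a strict one. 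This gives $\rho(\mathbf{M})<1$, contradicting that $1$ is an eigenvalue, and hence $(m_1,g_1)=(m_2,g_2)$.

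I expect the genuinely delicate points to be exactly these two. First, the preliminary positivity $\Im(zm_i)\ge 0$, needed both to fix the sign of $\Im d_i$ and to make the displayed identities usable; I would derive it from the equation system together with \eqref{3.2} and property \eqref{2.3}, rather than take it for granted. Second, matching the coefficients $cz$ and $c/((1+cg_1)(1+cg_2))$ appearing in $\mathbf{M}$ against the coefficients $c\,\Im(zm_i)$ and $c\,\Im g_i/|1+cg_i|^2$ appearing in the identities for $\Im m_i$ and $\Im g_i$, so that the Cauchy--Schwarz products are exactly controlled and the residual discrepancy is absorbed by the strictly positive $v$-terms. The remaining steps — finiteness of the integrals, the algebra of $d_2-d_1$, and passing from $\rho(\mathbf{M})<1$ to the conclusion — are routine.
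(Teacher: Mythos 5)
Your proposal is correct and follows essentially the paper's own route: subtract the two copies of \eqref{2.1}, bound the resulting kernel $1/(d_1d_2)$ by Cauchy--Schwarz, and use the imaginary-part identities for $\Im m$ and $\Im g$ to show the resulting linear map is a strict contraction. Your spectral-radius reformulation is cosmetic rather than a different route — verifying the Jury condition $\det(I-\mathbf{N})>0$ for the nonnegative $2\times2$ bound $\mathbf{N}$ requires precisely the paper's chain of inequalities \eqref{b6}--\eqref{b11} together with the elementary bound $1-ab\ge\sqrt{(1-a^2)(1-b^2)}$ to pass from the mixed Cauchy--Schwarz products $A_\ell^{1/2}(g_1)A_\ell^{1/2}(g_2)$, $B_\ell^{1/2}(g_1)B_\ell^{1/2}(g_2)$ to the per-$g_i$ identities (a bridge you gesture at but do not name) — and your flag that $\Im(zm)\ge 0$ must be established before discarding terms is a genuine subtlety the paper uses without comment.
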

\begin{proof}

Suppose $m_n\to m $ and $g_n\to g$ along some subsequence, and the set of equations \eqref{2.1}  have two different solutions $( m_1, g_1) \neq (m_2, g_2)$, then we have 
\begin{eqnarray}\label{b3}
m_1-m_2&=&\int\frac{\frac{cts(g_1-g_2)}{(1+cg_1)(1+cg_2)}+ctz({m_1- m_2}) }{\left(
\frac{st}{1+cg_1}-(1+cm_1 t)z +t(1-c)\right) \left(\frac{st}{1+cg_2} -(1+cm_2 t)z +t(1-c) \right)}dH(t,s),\\ \label{4.1}
g_1-g_2&=&\int\frac{\frac{ct^2s(g_1-g_2)}{(1+cg_1)(1+cg_2)}+ct^2z({m_1- m_2})}{{\left(\frac{st}{1+cg_1} -(1+cm_1 t)z +t(1-c)\right)\left(\frac{st}{1+cg_2}-(1+cm_2 t)z+t(1-c) \right)}}dH(t,s).\label{4.2}
\end{eqnarray}

For the convenience of calculation, we have made some definitions as follows.  
Write for $ \ell=1,2 $,
\begin{eqnarray*}
    A_{\ell}(g_{1})&=&\int\frac{\frac{ct^\ell s}{|1+cg_1|^2}} {\left|\frac{st}{1+cg_1}-(1+cm_1 t)z+t(1-c) \right|^2}dH(t,s), \\
	B_\ell(g_{1})&=&\int\frac{ct^\ell |z|}{\left|\frac{st}{1+cg_1} -(1+cm_1 t)z+t(1-c) \right|^2}dH(t,s),\\
	C_\ell(g_{1})&=&\int\frac{t^{\ell-1}} {\left|\frac{st}{1+cg_1}-(1+cm_1 t)z+t(1-c)\right|^2}dH(t,s).
\end{eqnarray*}

By comparing the imaginary parts of \eqref{2.1} and denoting $g=g_r+ig_i$, we obtain 
\begin{eqnarray}\label{b6}
m_i&>& A_1(g)g_i+B_1(g)\frac{(z{m})_i}{|z|},\\ \label{b0}
g_i&>& A_2(g)g_i+B_2(g)\frac{(z{m})_i}{|z|}, 
\end{eqnarray}
which implies
\begin{eqnarray}\label{b7}
g_i&>&\frac{B_2(g)}{1-A_2(g)}\frac{(z{m})_i}{|z|}.
\end{eqnarray}
Substituting (\ref{b7}) into (\ref{b6}), we obtain
\begin{eqnarray}\label{b8}
{m}_i&>& \left(\frac{A_1(g)B_2(g)}{1-A_2(g)}+B_1(g)\right)\frac{(z{m})_i}{|z|}.
\end{eqnarray}
Then by multiplying $\textit{z}$ on both sides of (\ref{2.1}) and comparing their imaginary parts, as before, we obtain 
\begin{equation}\label{b12}
(z{m})_i=\left(\frac{A_1(g)B_2(g)}{1-A_2(g)}+B_1(g)\right)|z|m_i.
\end{equation}
Substituting (\ref{b12}) into (\ref{b8}), we obtain 
\[ {m}_i>\left(\frac{A_1(g)B_2(g)}{1-A_2(g)}+B_1(g)\right)^2 {m}_i.\]
Hence, it is clear that 
\begin{equation}
\left(\frac{A_1(g)B_2(g)}{1-A_2(g)}+B_1(g)\right)^2<1.
\end{equation}
The inequality (\ref{b0}) implies that $1-A_2(g)>0$, which, together with the above inequality, implies that 
\begin{equation} \label{b11}
\frac{A_1(g)B_2(g)}{(1-A_2(g))(1-B_1(g))}<1. 
\end{equation}

Next, to complete the proof of uniqueness of solutions, we choose disproved method and we shall derive an inequality which is contradictory to (\ref{b11}).

From (\ref{b3}) and (\ref{4.2}) and applying the Cauchy-Schwarz inequaity, we obtain that 
\begin{eqnarray*}
	|{m}_1-{m}_2|&\le & A_1^{1/2}(g_1)A_1^{1/2}(g_2)|g_1-g_2|+ B_1^{1/2}(g_1)B_1^{1/2}(g_2)|{m}_1-{m}_2|\\
	&\le &\frac{A_1^{1/2}(g_1)A_1^{1/2}(g_2)}{1- B_1^{1/2}(g_1)B_1^{1/2}(g_2)}|g_1-g_2|,\\
	|g_1-g_2|&\le & A_2^{1/2}(g_1)A_2^{1/2}(g_2)|g_1-g_2|+ B_2^{1/2}(g_1)B_2^{1/2}(g_2)|{\textit{m}_1- \textit{m}_2}|\\
	&\le&\frac{B_2^{1/2}(g_1)B_2^{1/2}(g_2)}{1-A_2^{1/2}(g_1)A_2^{1/2}(g_2)}|{m}_1-{m}_2|.
\end{eqnarray*}
By the elementary inequality that $1-ab\ge \sqrt{(1-a^2)(1-b^2)}$, the above two inequalities yield,
\begin{eqnarray*}
	|{m}_1-{m}_2|&\le &\frac{A_1^{1/2}(g_1)A_1^{1/2}(g_2)}{[(1- B_1(g_1))(1-B_1(g_2))]^{1/2}}|g_1-g_2|\\
	|g_1-g_2|&\le & \frac{B_2^{1/2}(g_1)B_2^{1/2}(g_2)}{[(1-A_2(g_1))(1-A_2(g_2))]^{1/2}}|{m}_1-{m}_2|.
\end{eqnarray*}
From the inequalities above, we can easily obtain that 
\begin{equation}\label{b5}
1\le \frac{A_1^{1/2}(g_1)A_1^{1/2}(g_2)B_2^{1/2}(g_1)B_2^{1/2}(g_2)}{[(1-A_2(g_1))(1-A_2(g_2))(1- B_1(g_1)) (1-B_1(g_2))]^{1/2}}.
\end{equation}

Making $g=g_1, g_2$, we reach to a contradiction to the inequality (\ref{b5}). The uniqueness of solutions of equation system \eqref{2.1} is proved.

\end{proof}

\section{Simulation} \label{sim}

In order to verify the correctness of Theorem \ref{them1}, we provide the following simulations to support our theoretical analysis and calculations in this section.  We will show the comparison between the empirical spectral density and the limiting spectral density of the $ F^{C_{n}} $ 
under the following cases.

\begin{figure}[htbp]
	\centering
	\begin{minipage}[t]{0.48\textwidth}
		\centering
		\includegraphics[width=6.2cm]{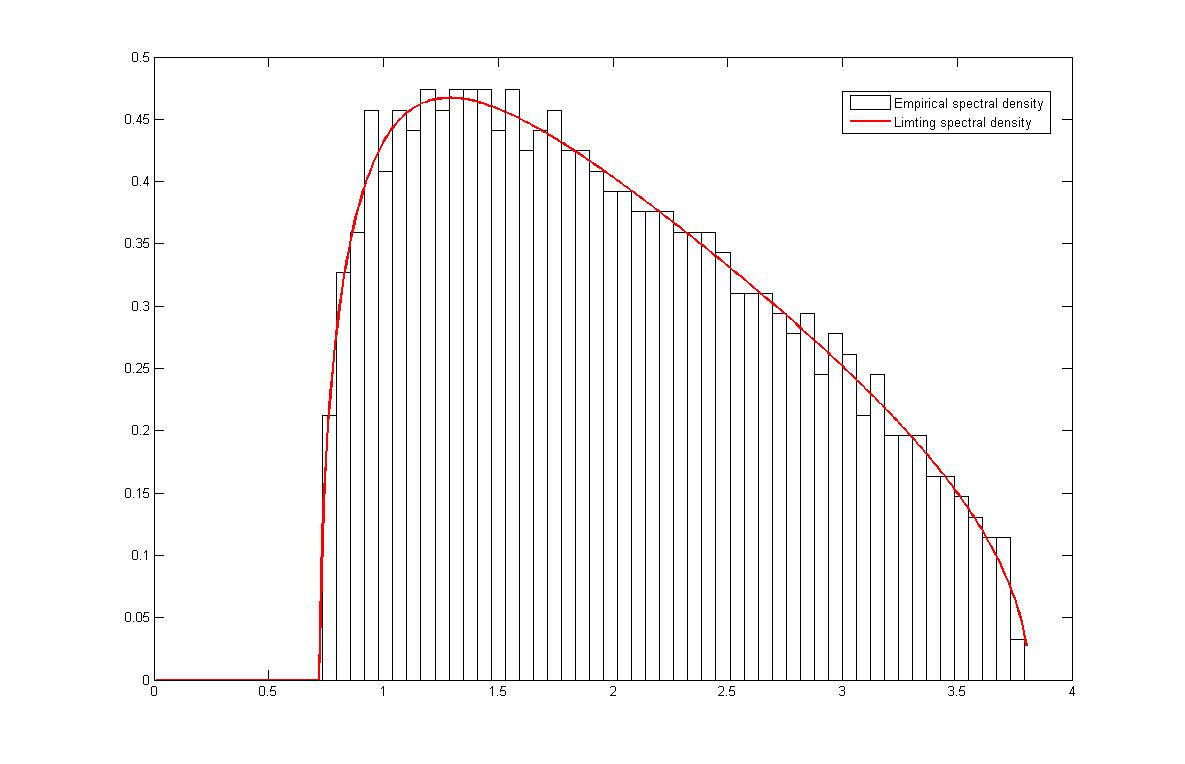} 
		\caption{Case 1: $ \textit{H} = \delta_{(1,1)} $}\label{f1}
	\end{minipage}
	\hspace{.1in}
	\begin{minipage}[t]{0.48\textwidth}
		\centering
		\includegraphics[width=6.2cm]{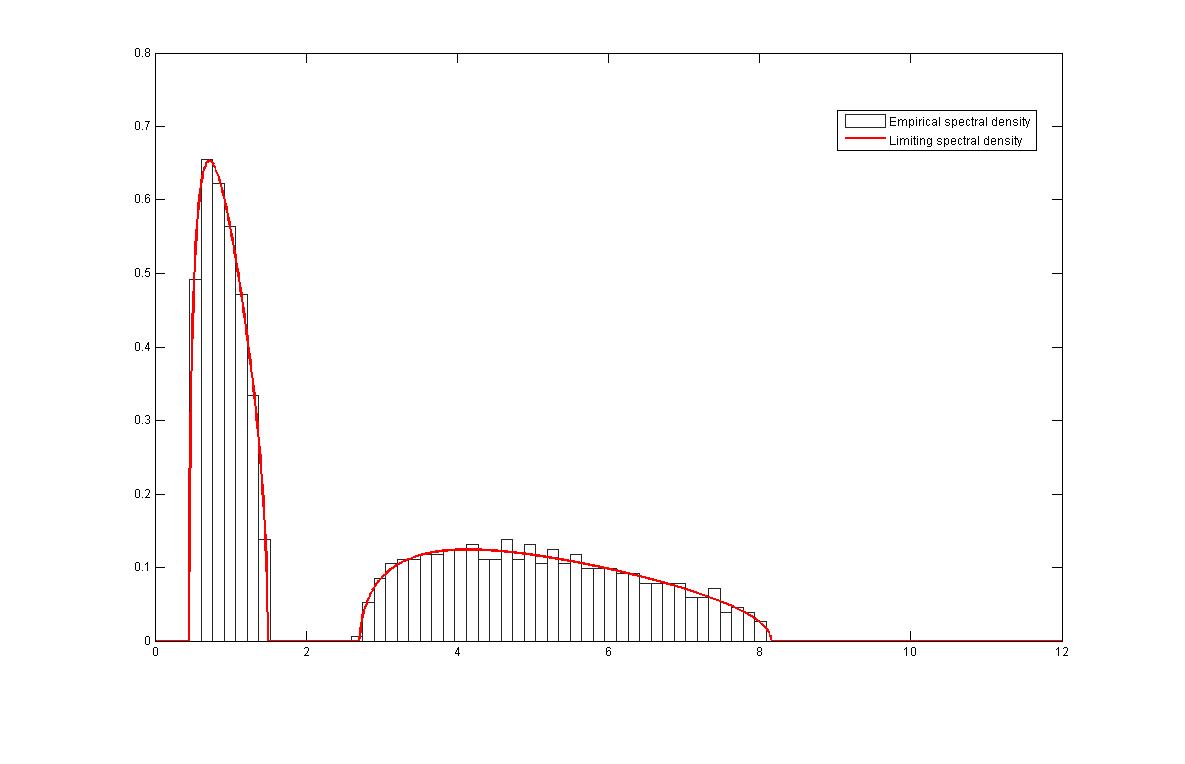}
		\caption{Case 2: $ \textit{H} = \frac{1}{2}\delta_{(0.5,1)}+\frac{1}{2}\delta_{(2.8,1)} $}\label{f2}
	\end{minipage}
\end{figure}

 
In Figure $ \ref{f1} $ and Figure $ \ref{f2} $, they show that the spectrum of the matrices have one point measure and two point mass respectively where $ \delta $ is dirac function. 
We solve $ \textit{m}(\textit{z}) $ from the equation \eqref{2.1} to obtain the density $ f(x)$ by 
$ \eqref{1.3} $.
The histogram represents the empirical spectral density of $ F^{C_{n}} $, and the red line is the density of the LSD calculated by the Stieltjes transform. According to their degree of fit, it implies the correctness of our LSD results.

\appendix
\section{Appendix} \label{Appendix}

There are some lemmas are well known.

\begin{lemma}( Lemma A.1. of Bai and Zhang \cite{bai2010spectral}) \label{lemma2.2}
	Let $ x_{1}, x_{2}, x_{3} $ be arbitrary non-negative numbers. For $ A, B, C $ are square matrices of the same size, then
	$$ F^{\sqrt{(ABC)(ABC)^{*}}}((x_{1}x_{2}x_{3},\infty)) \leq F^{\sqrt{AA^{*}}}((x_{1},\infty))
	+ F^{\sqrt{BB^{*}}}((x_{2},\infty))	+ F^{\sqrt{CC^{*}}}((x_{3},\infty)), $$
	where $ \sqrt{AA^{*}}  $ denotes the matrix drived from $ AA^{*} $ by replacing the eigenvalues in its spectral decomposition the  eigenvalues with their square roots. And  $\sqrt{BB^{*}},\sqrt{CC^{*}}$ are  simliar.
\end{lemma}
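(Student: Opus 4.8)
The plan is to prove Lemma~\ref{lemma2.2} by converting the inequality between empirical distribution functions into a counting statement about singular values, and then producing that count from an explicit ``low rank plus small norm'' splitting of each of the three factors. Throughout, singular values are listed in non-increasing order, and I use that the eigenvalues of $\sqrt{MM^{*}}$ are exactly the singular values $\sigma_{1}(M)\ge\sigma_{2}(M)\ge\cdots$ of an $n\times n$ matrix $M$, so that $nF^{\sqrt{MM^{*}}}((x,\infty))$ equals the number of indices $i$ with $\sigma_{i}(M)>x$.

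First I would set $k_{1}=nF^{\sqrt{AA^{*}}}((x_{1},\infty))$, $k_{2}=nF^{\sqrt{BB^{*}}}((x_{2},\infty))$ and $k_{3}=nF^{\sqrt{CC^{*}}}((x_{3},\infty))$, and reduce the lemma to the claim that at most $k_{1}+k_{2}+k_{3}$ singular values of $ABC$ exceed $x_{1}x_{2}x_{3}$, i.e.\ $\sigma_{k_{1}+k_{2}+k_{3}+1}(ABC)\le x_{1}x_{2}x_{3}$; dividing this count by $n$ is precisely the asserted inequality. (If $k_{1}+k_{2}+k_{3}\ge n$ the right-hand side of the lemma is at least $1$ and there is nothing to prove, so this index is well defined.)

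Next, for each factor I would use its singular value decomposition to write $A=A_{1}+A_{2}$, where $A_{1}$ retains the dyads with $\sigma_{i}(A)>x_{1}$, so $\operatorname{rank}A_{1}\le k_{1}$, and $A_{2}$ retains the rest, so $\|A_{2}\|\le x_{1}$; likewise $B=B_{1}+B_{2}$ with $\operatorname{rank}B_{1}\le k_{2}$, $\|B_{2}\|\le x_{2}$, and $C=C_{1}+C_{2}$ with $\operatorname{rank}C_{1}\le k_{3}$, $\|C_{2}\|\le x_{3}$. Expanding and regrouping,
\[
ABC=\underbrace{A_{1}BC+A_{2}B_{1}C+A_{2}B_{2}C_{1}}_{=:R}+\underbrace{A_{2}B_{2}C_{2}}_{=:S},
\]
where $\operatorname{rank}R\le\operatorname{rank}A_{1}+\operatorname{rank}B_{1}+\operatorname{rank}C_{1}\le k_{1}+k_{2}+k_{3}$ and $\|S\|\le\|A_{2}\|\,\|B_{2}\|\,\|C_{2}\|\le x_{1}x_{2}x_{3}$. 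Finally I would invoke the Weyl-type inequality $\sigma_{i+j-1}(R+S)\le\sigma_{i}(R)+\sigma_{j}(S)$ with $i=k_{1}+k_{2}+k_{3}+1$ and $j=1$: since $\sigma_{i}(R)=0$ by the rank bound and $\sigma_{1}(S)=\|S\|\le x_{1}x_{2}x_{3}$, this gives $\sigma_{k_{1}+k_{2}+k_{3}+1}(ABC)\le x_{1}x_{2}x_{3}$, which is exactly what is needed.

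The only point requiring care is the bookkeeping of the cross terms: the raw expansion of $(A_{1}+A_{2})(B_{1}+B_{2})(C_{1}+C_{2})$ has eight summands, and the grouping $A_{1}BC+A_{2}B_{1}C+A_{2}B_{2}C_{1}+A_{2}B_{2}C_{2}$ is what keeps the total rank of the remainder $R$ at $k_{1}+k_{2}+k_{3}$ rather than overcounting. Everything else is a routine application of the standard sub-additivity and sub-multiplicativity properties of singular values and the spectral norm.
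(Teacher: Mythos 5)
Your proof is correct. The paper cites Lemma~\ref{lemma2.2} from Bai and Zhang \cite{bai2010spectral} without reproducing its proof, so there is no in-text argument to compare against; what you give is a valid self-contained derivation. The reduction to the singular-value count $\sigma_{k_1+k_2+k_3+1}(ABC)\le x_1x_2x_3$, the SVD-based splitting of each factor into a rank-$k_\ell$ piece plus a piece of norm at most $x_\ell$, the telescoping regrouping $ABC=A_1BC+A_2B_1C+A_2B_2C_1+A_2B_2C_2$ (which keeps the rank of the first three summands at $k_1+k_2+k_3$), and the final application of the additive Weyl inequality $\sigma_{i+j-1}(R+S)\le\sigma_i(R)+\sigma_j(S)$ with $i=k_1+k_2+k_3+1$, $j=1$ are all correct and fit together without gaps. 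This is essentially the standard proof of the multiplicative Weyl--Horn inequality $\sigma_{i+j+k-2}(ABC)\le\sigma_i(A)\sigma_j(B)\sigma_k(C)$ specialized to the present counting statement, which is also the route the cited reference takes; no genuinely different idea is involved.
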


\begin{lemma} \label{lemma2.3}
	For $ r\times s $ matrices $ A $ and $ B $, with singular values 
	$ \sigma_{1}\geq\sigma_{2}\geq...\geq\sigma_{q}, \tau_{1}\geq\tau_{2}\geq...\geq\tau_{q} $, where 
	$ q=min\{r,s\} $, we have
	\begin{align*}
	\left( \sum_{i=1}^q(\sigma_{i}-\tau_{i})^{2}\right)^{\dfrac{1}{2}}\leq\|A-B\|_{2},
	\end{align*}
	where $ \|\cdot\|_{2} $ is the  Forbenius norm.
\end{lemma}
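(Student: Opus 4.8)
The plan is to reduce this singular-value inequality (Mirsky's theorem) to the classical Hoffman–Wielandt inequality for Hermitian matrices via the Jordan–Wielandt dilation. For an $r\times s$ matrix $M$, set
$\widetilde M=\begin{pmatrix}0 & M\\ M^{*} & 0\end{pmatrix}$,
a Hermitian matrix of order $r+s$. Using the singular value decomposition of $M$ one checks directly that the eigenvalue multiset of $\widetilde M$ is $\{\pm\sigma_{1}(M),\dots,\pm\sigma_{q}(M)\}$ together with $|r-s|$ zeros; hence, listed in non-increasing order, the eigenvalues of $\widetilde M$ are $\sigma_{1}\ge\cdots\ge\sigma_{q}\ge 0\ge\cdots\ge 0\ge -\sigma_{q}\ge\cdots\ge -\sigma_{1}$. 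Applying this to $A$ and $B$ and comparing the two sorted eigenvalue lists position by position gives
$$\sum_{k}\bigl|\lambda_{k}(\widetilde A)-\lambda_{k}(\widetilde B)\bigr|^{2}=2\sum_{i=1}^{q}(\sigma_{i}-\tau_{i})^{2},$$
while a block computation gives $\|\widetilde A-\widetilde B\|_{2}^{2}=2\|A-B\|_{2}^{2}$ (here $\|\cdot\|_{2}$ is the Frobenius norm as in the statement). Thus the assertion follows at once from the Hermitian Hoffman–Wielandt inequality $\sum_{k}|\lambda_{k}(\widetilde A)-\lambda_{k}(\widetilde B)|^{2}\le\|\widetilde A-\widetilde B\|_{2}^{2}$ applied to $\widetilde A,\widetilde B$.

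It therefore remains to establish, for Hermitian $X,Y$ of the same order with eigenvalues $\lambda_{1}\ge\cdots\ge\lambda_{m}$ and $\mu_{1}\ge\cdots\ge\mu_{m}$, that $\sum_{i}(\lambda_{i}-\mu_{i})^{2}\le\|X-Y\|_{2}^{2}$. I would diagonalize $X=U\Lambda U^{*}$, $Y=V M V^{*}$ with $\Lambda=\mathrm{diag}(\lambda_{i})$, $M=\mathrm{diag}(\mu_{j})$, and set $W=U^{*}V$, which is unitary. Since the Frobenius norm is unitarily invariant,
$$\|X-Y\|_{2}^{2}=\|\Lambda-WMW^{*}\|_{2}^{2}=\sum_{i}\lambda_{i}^{2}+\sum_{j}\mu_{j}^{2}-2\sum_{i,j}\lambda_{i}\mu_{j}\,|w_{ij}|^{2}.$$
The matrix $P=(|w_{ij}|^{2})$ is doubly stochastic, so by Birkhoff's theorem the linear functional $P\mapsto\sum_{i,j}\lambda_{i}\mu_{j}p_{ij}$ attains its maximum over doubly stochastic matrices at a permutation matrix, and since both $(\lambda_{i})$ and $(\mu_{j})$ are non-increasing, the rearrangement inequality shows that maximum equals $\sum_{i}\lambda_{i}\mu_{i}$. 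Hence $\|X-Y\|_{2}^{2}\ge\sum_{i}\lambda_{i}^{2}+\sum_{i}\mu_{i}^{2}-2\sum_{i}\lambda_{i}\mu_{i}=\sum_{i}(\lambda_{i}-\mu_{i})^{2}$, which completes the proof.

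The only nontrivial point — and hence the main obstacle — is the Hermitian step, specifically the passage from minimizing over unitaries $W$ to minimizing over doubly stochastic $P=(|w_{ij}|^{2})$: one must note that this relaxation can only lower the minimum, that the relaxed problem is a linear program over the Birkhoff polytope and so is solved by a permutation, that the rearrangement inequality forces the optimal permutation to be the identity, and that this permutation is realized by a genuine unitary $W$, so the relaxation is in fact tight. An alternative is simply to cite Mirsky's theorem or the Hoffman–Wielandt inequality from a standard reference (e.g. Bhatia, \emph{Matrix Analysis}), in which case the dilation paragraph is the entire argument; I would nonetheless prefer to include the short self-contained doubly-stochastic reduction.
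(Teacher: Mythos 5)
Your proof is correct. Note, however, that the paper offers no proof of this lemma at all: it appears in the appendix under the heading that the lemmas there ``are well known,'' so there is nothing to compare your argument against. What you give is the standard derivation of Mirsky's singular-value perturbation bound — the Jordan--Wielandt dilation $\widetilde M=\bigl(\begin{smallmatrix}0 & M\\ M^{*} & 0\end{smallmatrix}\bigr)$ reducing the claim to the Hermitian Hoffman--Wielandt inequality, followed by the Birkhoff/doubly-stochastic and rearrangement argument — and every step checks out: the sorted eigenvalue lists of $\widetilde A,\widetilde B$ do pair up so that the squared differences sum to $2\sum_i(\sigma_i-\tau_i)^2$, and $\|\widetilde A-\widetilde B\|_2^2=2\|A-B\|_2^2$. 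One small remark: the ``tightness of the relaxation'' issue you flag at the end is not actually needed. You are not optimizing over $W$; the unitary $W=U^{*}V$ is fixed by $X$ and $Y$, and all you require is the one-sided bound $\sum_{i,j}\lambda_i\mu_j|w_{ij}|^2\le\max_P\sum_{i,j}\lambda_i\mu_j p_{ij}=\sum_i\lambda_i\mu_i$, which Birkhoff plus rearrangement already gives. Whether the permutation optimum is attained by a genuine unitary is irrelevant to the inequality.
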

\begin{lemma} \label{lemma2.4}
	For $ n\times N $ matrices $ P $ and $ Q $, 
	\begin{align*}
	\|F^{PP^{*}}-F^{QQ^{*}}\|\leq\frac{2}{n} \mathrm{rank}(P-Q).
	\end{align*}
\end{lemma}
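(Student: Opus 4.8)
The plan is to deduce Lemma~\ref{lemma2.4} from the classical rank inequality for Hermitian matrices, namely $\|F^{A}-F^{B}\|\le \tfrac1n\,\mathrm{rank}(A-B)$ whenever $A,B$ are $n\times n$ Hermitian (with $\|\cdot\|$ the sup-norm of distribution functions), applied to $A=PP^{*}$ and $B=QQ^{*}$. Two ingredients are needed: a bound on $\mathrm{rank}(PP^{*}-QQ^{*})$ in terms of $\mathrm{rank}(P-Q)$, and the Hermitian rank inequality itself. Since both matrices $PP^{*}$ and $QQ^{*}$ are $n\times n$ nonnegative definite, their ESDs are well defined and the strategy applies directly.

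For the rank bound, I would use the algebraic identity $PP^{*}-QQ^{*}=P(P-Q)^{*}+(P-Q)Q^{*}$. Because the rank of a product is at most the rank of either factor, and rank is subadditive, each of the two summands has rank at most $\mathrm{rank}(P-Q)$, whence $\mathrm{rank}(PP^{*}-QQ^{*})\le 2\,\mathrm{rank}(P-Q)$. This step is immediate.

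For the Hermitian rank inequality, set $r=\mathrm{rank}(A-B)$ and order eigenvalues decreasingly, $\lambda_{1}(\cdot)\ge\cdots\ge\lambda_{n}(\cdot)$. The key point is that $A-B$ has at most $r$ nonzero eigenvalues, so $\lambda_{r+1}(A-B)\le 0\le\lambda_{n-r}(A-B)$. Feeding this into Weyl's perturbation inequalities $\lambda_{i+j-1}(X+Y)\le\lambda_{i}(X)+\lambda_{j}(Y)$ and $\lambda_{i+j-n}(X+Y)\ge\lambda_{i}(X)+\lambda_{j}(Y)$, with $X=B$, $Y=A-B$, and $j=r+1$ resp.\ $j=n-r$, yields the shifted interlacing $\lambda_{i+r}(A)\le\lambda_{i}(B)$ and, by symmetry, $\lambda_{i+r}(B)\le\lambda_{i}(A)$ for all admissible $i$. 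A counting argument then shows that for every real $x$ the index set $\{i:\lambda_i(B)\le x\}$ injects into $\{i:\lambda_i(A)\le x\}$ after discarding at most $r$ indices, so the two counts differ by at most $r$; dividing by $n$ gives exactly $\|F^{A}-F^{B}\|\le r/n$. Combining this with the rank bound gives $\|F^{PP^{*}}-F^{QQ^{*}}\|\le \tfrac1n\,\mathrm{rank}(PP^{*}-QQ^{*})\le \tfrac2n\,\mathrm{rank}(P-Q)$, as claimed.

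I expect no genuine obstacle: this is standard linear algebra. The only mildly delicate point is the bookkeeping in the counting step that converts the shifted eigenvalue interlacing into the uniform bound on the distribution functions. If one prefers, the Hermitian rank inequality can simply be quoted from a standard reference on random matrix theory and only the rank estimate $\mathrm{rank}(PP^{*}-QQ^{*})\le 2\,\mathrm{rank}(P-Q)$ carried out explicitly.
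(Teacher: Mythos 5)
Your proof is correct. The paper itself gives no proof of Lemma~\ref{lemma2.4}; it is merely recorded in the Appendix as a known fact, so there is nothing to compare against directly. Your two-step route --- first the algebraic identity $PP^{*}-QQ^{*}=P(P-Q)^{*}+(P-Q)Q^{*}$ to bound $\mathrm{rank}(PP^{*}-QQ^{*})\le 2\,\mathrm{rank}(P-Q)$, then the Hermitian rank inequality $\|F^{A}-F^{B}\|\le\frac{1}{n}\mathrm{rank}(A-B)$ via Weyl interlacing and a counting argument --- is sound and yields exactly the constant $\frac{2}{n}$ stated in the lemma. It is worth noting that the standard reference version of this result (e.g.\ Theorem~A.44 in Bai and Silverstein's book) actually gives the sharper bound $\|F^{PP^{*}}-F^{QQ^{*}}\|\le\frac{1}{n}\mathrm{rank}(P-Q)$, obtained by applying the same shift-interlacing argument directly to the \emph{singular values} of $P$ and $Q$ (if $P=Q+E$ with $\mathrm{rank}(E)=r$, then $\sigma_{i+r}(P)\le\sigma_i(Q)$ and $\sigma_{i+r}(Q)\le\sigma_i(P)$), thereby avoiding the factor of $2$ introduced by passing through $PP^{*}-QQ^{*}$. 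Your argument proves the statement as written, but the factor $2$ in the paper's formulation is not actually necessary.
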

\begin{lemma} \label{lemma2.5}
	( Lemma 2.6 of Silverstein and Bai \cite{silverstein1995empirical})  Let $z=u+iv \in \mathbb{C}$ with 
	$A $ $ n\times n $ matrix and $ B $ Hermitian matrix, and $ r \in \mathbb{C}^{n}$. Then
	\begin{equation*}
	\begin{aligned}
	|&\mathrm{tr}\left((B-zI)^{-1}-\left(B+rr^{*}-zI\right)^{-1}\right) A| 
	=\left|\frac{r^{*}(B-z I)^{-1} A(B-z I)^{-1} r}{1+r^{*}(B-z I)^{-1} r}\right| \leq\frac{\|A\|}{\textit{v}}.
	\end{aligned}
	\end{equation*}
\end{lemma}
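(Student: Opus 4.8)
The plan is to read off the stated identity from the Sherman--Morrison rank-one update formula for resolvents, and then bound the resulting scalar quotient by estimating its numerator via the Cauchy--Schwarz inequality and its denominator from below by its imaginary part. Abbreviate $G=(B-zI)^{-1}$; since $B$ is Hermitian and $\Im z=v>0$, $B-zI$ is invertible with $\|G\|\le 1/v$, and $G^{*}=(B-\bar z I)^{-1}$ commutes with $G$ because both are functions of $B$. The Sherman--Morrison formula gives
\begin{equation*}
(B+rr^{*}-zI)^{-1}=G-\frac{G\,rr^{*}G}{1+r^{*}Gr},
\end{equation*}
which is legitimate once we know $1+r^{*}Gr\neq0$ (verified below). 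Subtracting and multiplying by $A$ on the right, the difference $(B-zI)^{-1}-(B+rr^{*}-zI)^{-1}$ equals $\dfrac{G\,rr^{*}GA}{1+r^{*}Gr}$; taking the trace and using the cyclic property $\mathrm{tr}(G\,rr^{*}GA)=r^{*}GAGr$ (a scalar) yields the asserted equality $\mathrm{tr}\bigl((B-zI)^{-1}-(B+rr^{*}-zI)^{-1}\bigr)A=\dfrac{r^{*}GAGr}{1+r^{*}Gr}$.

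For the bound, set $\rho:=\|Gr\|^{2}=r^{*}G^{*}Gr\ge0$. Writing $r^{*}GAGr=(G^{*}r)^{*}A(Gr)$ and applying Cauchy--Schwarz, $|r^{*}GAGr|\le\|A\|\,\|G^{*}r\|\,\|Gr\|$; since $G$ and $G^{*}$ commute, $\|G^{*}r\|^{2}=r^{*}GG^{*}r=r^{*}G^{*}Gr=\rho$, so the numerator is at most $\|A\|\rho$. For the denominator, diagonalize $B$ in an orthonormal eigenbasis with real eigenvalues $\lambda_{k}$ and coordinates $c_{k}$ of $r$; then $r^{*}Gr=\sum_{k}|c_{k}|^{2}/(\lambda_{k}-z)$, whence $\Im(1+r^{*}Gr)=\Im(r^{*}Gr)=v\sum_{k}|c_{k}|^{2}/|\lambda_{k}-z|^{2}=v\rho$. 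Consequently $|1+r^{*}Gr|\ge|\Im(1+r^{*}Gr)|=v\rho$, which in particular is nonzero whenever $\rho>0$. Combining the two estimates gives $\bigl|\dfrac{r^{*}GAGr}{1+r^{*}Gr}\bigr|\le\dfrac{\|A\|\rho}{v\rho}=\dfrac{\|A\|}{v}$.

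This is a standard resolvent identity, so there is no genuine difficulty; the only points deserving attention are the non-vanishing of $1+r^{*}Gr$ (needed both so that Sherman--Morrison applies and so that the quotient is well defined), which is settled by the computation $\Im(r^{*}Gr)=v\rho$ above, the identity $\|G^{*}r\|=\|Gr\|$ coming from the commutativity of $G$ and $G^{*}$, and the degenerate case $\rho=0$, which forces $r=0$ and makes both sides of the statement trivially zero.
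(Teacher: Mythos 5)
Your proof is correct. The paper itself gives no argument for this lemma --- it is stated in the appendix as a known result, cited from Silverstein and Bai --- and your derivation (Sherman--Morrison for the rank-one resolvent update, cyclicity of the trace, Cauchy--Schwarz on the numerator, and the lower bound $|1+r^{*}Gr|\ge \Im(r^{*}Gr)=v\|Gr\|^{2}$ for the denominator) is exactly the standard proof given in that source, including the correct handling of the degenerate case $r=0$. The only cosmetic point is that the statement writes $z\in\mathbb{C}$ while the bound requires $v=\Im z>0$, which you rightly assume throughout.
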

\begin{lemma} \label{lemma2.6}
	(Lemma 3.1 of Silverstein and Bai \cite{silverstein1995empirical}) Let $C=(c_{ij}), c_{ij}\in\mathbb{C}$, be an $ n\times n $ matrix with $ \|C\| \leq 1 $, 
	and $ Y=\left(X_{1}, \ldots, X_{n}\right)^{T}, X_{i} \in \mathbb{C} $, 
	where the $ X_{i} $'s are i.i.d. satisfying Assumption (a) and $ |X_{11}|\leq \ln n $. Then
	\begin{equation*}
	E\left|Y^{*}CY-\mathrm{tr} C\right|^{6} \leqslant K n^{3}(\ln (n))^{12},
	\end{equation*}
	where the constant $ K $ does not depend on $ n $, $ C $.
\end{lemma}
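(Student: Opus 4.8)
The plan is to split $Y^{*}CY-\mathrm{tr}\,C$ into a diagonal and an off-diagonal contribution and to bound the sixth moment of each. Writing $C=(c_{ij})$ and $Y=(X_{1},\dots,X_{n})^{T}$, I would use the decomposition
\[
Y^{*}CY-\mathrm{tr}\,C=\sum_{i=1}^{n}c_{ii}\big(|X_{i}|^{2}-1\big)+\sum_{i\neq j}\bar X_{i}c_{ij}X_{j}=:U+V,
\]
where the first sum is centered because $\E|X_{1}|^{2}=1$. By the $c_{r}$-inequality it then suffices to prove $\E|U|^{6}\le Kn^{3}(\ln n)^{12}$ and $\E|V|^{6}\le Kn^{3}(\ln n)^{12}$ separately.

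First I would dispose of $U$, a sum of independent mean-zero terms, via Rosenthal's inequality:
\[
\E|U|^{6}\le K\Big[\Big(\sum_{i}|c_{ii}|^{2}\,\E\big||X_{1}|^{2}-1\big|^{2}\Big)^{3}+\sum_{i}|c_{ii}|^{6}\,\E\big||X_{1}|^{2}-1\big|^{6}\Big].
\]
The truncation $|X_{1}|\le\ln n$ gives $\E\big||X_{1}|^{2}-1\big|^{2}\le\E|X_{1}|^{4}\le(\ln n)^{2}$ and $\E\big||X_{1}|^{2}-1\big|^{6}\le K(\ln n)^{12}$, while $\|C\|\le 1$ forces $|c_{ii}|\le 1$, $\sum_{i}|c_{ii}|^{2}\le\mathrm{tr}(CC^{*})\le n$, and hence $\sum_{i}|c_{ii}|^{6}\le n$; substituting, $\E|U|^{6}\le K\big(n^{3}(\ln n)^{6}+n(\ln n)^{12}\big)\le Kn^{3}(\ln n)^{12}$.

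The off-diagonal term $V$ is where the work lies, and I would treat it by a martingale decomposition. With $\mathcal F_{k}=\sigma(X_{1},\dots,X_{k})$ one has $V=\sum_{k=1}^{n}\eta_{k}$, where
\[
\eta_{k}=\E\big[V\mid\mathcal F_{k}\big]-\E\big[V\mid\mathcal F_{k-1}\big]=\bar X_{k}\sum_{i<k}c_{ki}X_{i}+X_{k}\sum_{i<k}\bar X_{i}c_{ik}
\]
is a martingale difference sequence, so Burkholder's inequality reduces matters to bounding $\E\big(\sum_{k}\E[|\eta_{k}|^{2}\mid\mathcal F_{k-1}]\big)^{3}$ and $\sum_{k}\E|\eta_{k}|^{6}$. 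For the latter, independence of $X_{k}$ from the two inner sums together with Rosenthal applied to $\sum_{i<k}c_{ki}X_{i}$ (whose coefficients satisfy $\sum_{i}|c_{ki}|^{2}\le\|C\|^{2}\le 1$) gives $\E|\eta_{k}|^{6}\le K(\ln n)^{8}$ and hence $\sum_{k}\E|\eta_{k}|^{6}\le Kn(\ln n)^{8}$. For the former, one has $\E[|\eta_{k}|^{2}\mid\mathcal F_{k-1}]\le K\big(|\sum_{i<k}c_{ki}X_{i}|^{2}+|\sum_{i<k}\bar X_{i}c_{ik}|^{2}\big)$; summing over $k$ and peeling off a diagonal part bounded deterministically by $(\ln n)^{2}\,\mathrm{tr}(CC^{*})\le n(\ln n)^{2}$, one is left with a quadratic form whose coefficient matrix has entries of modulus $\le 1$ and trace $O(n)$, and whose third moment is $O(n^{3})$ up to a fixed power of $\ln n$ by one more round of the same diagonal/off-diagonal split, using $\E|\cdot|^{3}\le(\E|\cdot|^{2})^{1/2}(\E|\cdot|^{4})^{1/2}$ and the Frobenius bound $\|\cdot\|_{F}^{2}\le n^{2}$. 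Collecting the pieces gives $\E|V|^{6}\le Kn^{3}(\ln n)^{12}$.

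I expect the main obstacle to be the bookkeeping in the conditional-variance term: one must ensure that the Frobenius-type quantities appearing there are $O(n)$ — which is exactly the bound $\mathrm{tr}(CC^{*})\le n$ — rather than $O(n^{2})$, since it is precisely this that yields the sharp power $n^{3}$ instead of a worse power of $n$; the logarithmic factors only need to be tracked loosely, any bounded power of $\ln n$ no larger than $12$ being acceptable. The lemma is quoted from Silverstein and Bai \cite{silverstein1995empirical}, so in practice one would simply invoke that reference; the sketch above indicates the standard structure of the argument.
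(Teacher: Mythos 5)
The paper itself does not prove this lemma; it is quoted verbatim from Silverstein and Bai (1995, Lemma 3.1), so there is no in-paper argument to compare against. Judged on its own merits, your sketch is a sound alternative to the argument in the cited reference, which proceeds by a direct combinatorial expansion of $|Y^*CY-\mathrm{tr}\,C|^{2p}$ and a count of index-coincidence patterns (this is also how Lemma B.26 in Bai and Silverstein's book is proved, yielding the general bound $K_p\bigl[(\E|X_{11}|^4\,\mathrm{tr}\,CC^*)^{p/2}+\E|X_{11}|^{2p}\,\mathrm{tr}(CC^*)^{p/2}\bigr]$, from which the stated estimate follows upon inserting $\mathrm{tr}\,CC^*\le n$ and the truncation moments $\E|X_{11}|^{2m}\le(\ln n)^{2m-2}$). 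Your route — splitting off the centred diagonal $U$ and attacking the off-diagonal bilinear form $V=\sum_{i\ne j}\bar X_i c_{ij}X_j$ via the natural martingale filtration together with the martingale Rosenthal inequality — is a well-established substitute; it localises the dependence structure rather than enumerating it, and the crucial input $\mathrm{tr}\,CC^*\le n\|C\|^2\le n$ (giving the sharp $n^3$ rather than $n^6$) is correctly identified as the engine of the bound.

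The one place where your sketch is thin is the conditional-variance term. After writing $\sum_k\E[|\eta_k|^2\mid\mathcal F_{k-1}]\le K\sum_k(|a_k|^2+|b_k|^2)$ and expanding $\sum_k|a_k|^2=\sum_{i,j}M'_{ij}X_i\bar X_j$, you peel off the diagonal $\sum_i M'_{ii}|X_i|^2\le(\ln n)^2\|C\|_F^2\le n(\ln n)^2$ correctly, but then dispatch the remaining off-diagonal quadratic form $W$ with a single sentence. To make this precise you need $\E|W|^3\le Kn^3(\ln n)^{O(1)}$, and your proposed interpolation $\E|W|^3\le(\E|W|^2)^{1/2}(\E|W|^4)^{1/2}$ does work, but only after one notes that $\|M'\|_F^2\le n^2$ (entries bounded by one), $\E|W|^2=O(\|M'\|_F^2)=O(n^2)$ by pairing, and $\E|W|^4=O(\|M'\|_F^4\,(\ln n)^{O(1)})=O(n^4(\ln n)^{O(1)})$ again by a pairing count — so even the "martingale" route bottoms out in a small combinatorial computation for the low even moments. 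That is not a gap in principle, but you should make the base case of the recursion explicit rather than gesturing at "one more round of the same split." Two cosmetic points: $\E\bigl||X_1|^2-1\bigr|^6\le K(\ln n)^{10}$ rather than $(\ln n)^{12}$ (it does not matter for the final bound), and the off-diagonal coefficient matrix $M'$ has \emph{zero} trace by construction, which is stronger than the "$O(n)$" you state.
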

\begin{lemma} \label{lemma2.7}(Sherman-Morrison formula)
	For $ n\times n$ matrices $ A $ and $ n\times 1 $ vectors $ q $ and $ v $, where $ A $ and $ A+vv^{*} $ are invertible, one has
	\begin{align*}
	q^{*}(A+vv^{*})^{-1}=q^{*}A^{-1}-\dfrac{q^{*}A^{-1}v}{1+v^{*}A^{-1}vv^{*}}^{-1}.
	\end{align*}
	When $q=v $, then
	\begin{align*}
	v^{*}(A+vv^{*})^{-1}=\dfrac{1}{1+v^{*}A^{-1}v}v^{*}A^{-1}.
	\end{align*}
\end{lemma}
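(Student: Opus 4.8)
The plan is a direct verification of the rank-one update identity, after first establishing that the scalar denominator $1+v^{*}A^{-1}v$ does not vanish, so that the right-hand side is well defined. For the non-vanishing, I would factor $A+vv^{*}=A(I+A^{-1}vv^{*})$; since both $A$ and $A+vv^{*}$ are invertible by hypothesis, the matrix $I+A^{-1}vv^{*}$ is invertible as well. Its determinant is $1+v^{*}A^{-1}v$ by Sylvester's determinant identity $\det(I+XY)=\det(I+YX)$ applied with $X=A^{-1}v$ and $Y=v^{*}$, hence $1+v^{*}A^{-1}v\neq 0$.

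Next I would introduce the candidate
\[
M=A^{-1}-\frac{A^{-1}vv^{*}A^{-1}}{1+v^{*}A^{-1}v}
\]
and check that $(A+vv^{*})M=I$. Expanding and using that $v^{*}A^{-1}v$ is a scalar,
\[
(A+vv^{*})M=I+vv^{*}A^{-1}-\frac{(A+vv^{*})A^{-1}vv^{*}A^{-1}}{1+v^{*}A^{-1}v}
=I+vv^{*}A^{-1}-\frac{v\,(1+v^{*}A^{-1}v)\,v^{*}A^{-1}}{1+v^{*}A^{-1}v},
\]
where I used $(A+vv^{*})A^{-1}v=v+v(v^{*}A^{-1}v)=v(1+v^{*}A^{-1}v)$. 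The last expression collapses to $I$, so $M=(A+vv^{*})^{-1}$. Multiplying this identity on the left by $q^{*}$ yields the first displayed equation of the lemma, and specializing $q=v$ gives
\[
v^{*}(A+vv^{*})^{-1}=v^{*}A^{-1}-\frac{(v^{*}A^{-1}v)\,v^{*}A^{-1}}{1+v^{*}A^{-1}v}=\frac{1}{1+v^{*}A^{-1}v}\,v^{*}A^{-1},
\]
which is the second displayed equation.

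There is no genuine obstacle here: the result is classical and the argument is a short algebraic manipulation of rank-one terms together with the fact that scalars commute with matrices. The only point that deserves a sentence of care is the non-vanishing of $1+v^{*}A^{-1}v$, which is precisely what the invertibility assumption on $A+vv^{*}$ provides; without it the stated formula would be meaningless.
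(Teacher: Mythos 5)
Your verification is correct and is the standard argument; the paper states this lemma in its appendix as a well-known fact without any proof, so there is nothing to compare against. You also (rightly) read past the typo in the paper's first display — the formula as printed, $q^{*}A^{-1}-\frac{q^{*}A^{-1}v}{1+v^{*}A^{-1}vv^{*}}^{-1}$, is garbled, and the identity you actually prove, $q^{*}(A+vv^{*})^{-1}=q^{*}A^{-1}-\frac{q^{*}A^{-1}vv^{*}A^{-1}}{1+v^{*}A^{-1}v}$, is the intended one; your remark on the non-vanishing of $1+v^{*}A^{-1}v$ via Sylvester's identity is a worthwhile addition that the paper omits.
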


\bibliographystyle{plain}	
\bibliography{lsdt621}

\end{document}